\newtheorem{theorem}{Theorem}[section]
\newtheorem{lemma}[theorem]{Lemma}
\newtheorem{proposition}[theorem]{Proposition}
\theoremstyle{remark}
\newtheorem{example}[theorem]{Example}
\newtheorem{remark}[theorem]{Remark}
\theoremstyle{definition}
\newtheorem{definition}[theorem]{Definition}
\newtheorem{problem}[theorem]{Problem}
\def\Nat{\mathbb{N}}
\newcommand{\dR}{\ensuremath{\mathbb{R}}} 
\newcommand{\R}{\dR}
\newcommand{\PP}{{\mathcal P}}
\newcommand{\Exp}{{\rm I\!E}}
\begin{document}

\title{Optimal transportation of processes with infinite Kantorovich distance. Independence and symmetry.}

\author{Alexander~V.~Kolesnikov}
\address{ Higher School of Economics, Moscow,  Russia}
\email{Sascha77@mail.ru}

\vspace{5mm}
\author{Danila~A.~Zaev}
\address{ Higher School of Economics, Moscow,  Russia}
\email{zaev.da@gmail.com}

\thanks{ 
The first named author was supported by RFBR project 14-01-00237 and  the DFG project  CRC 701.
This study (research grant No 14-01-0056) was supported by The National Research University-Higher School of Economics' Academic Fund Program in 2014/2015.
 The second author was partially supported by AG Laboratory NRU-HSE, RF government grant, ag.  11.G34.31.0023.
}

\keywords{Monge--Kantorovich problem, optimal transportation, Kantorovich duality,  Gaussian measures,  Gibbs measures, log-concave measures, exchangeability, stationarity, ergodicity, transportation inequalities, entropy, and Kullback-Leibler distance}

\begin{abstract}
We consider probability measures on $\mathbb{R}^{\infty}$ and study
 optimal transportation mappings for the case of infinite
Kantorovich  distance. Our examples include 1) quasi-product measures,
2) measures with certain
symmetric properties, in particular,  exchangeable and stationary measures.
We show in the latter case  that existence problem for optimal transportation
is closely related to ergodicity of the target measure.
In particular, we prove existence of the symmetric optimal transportation for a certain class
of stationary Gibbs measures.
\end{abstract}

\maketitle

\section{Introduction}

Let us consider two Borel probability measures $\mu, \nu$ on $\mathbb{R}^d$.
The central result (Brenier theorem) of the finite-dimensional optimal transportation theory
establishes under fairy general assumptions existence of the corresponding optimal transportation mapping $T$, which can be characterized by 
the following properties:

1) $T=\nabla \varphi$, where $\varphi$ is a convex function

2) $\nu$ is the image of $\mu$ under $T$: $\nu = \mu \circ T^{-1}$.

The mapping $T$ exists, in particular, when both measures are absolutely continuous and have finite second moments.
The second assumption can be replaced by the weaker assumption of the finiteness of the corresponding Kantorovich distance $W_2(\mu,\nu)$
but it is does not make much difference for the finite-dimensional problems.
However, this difference  becomes  essential in the infinite-dimensional case.

It is well-known that the optimal transportation mapping $T$
  solves the so-called Monge problem, meaning that $T$ gives minimum
  to the functional
  $$
    \int_{\mathbb{R}^d} \| r(x) -x \|^2 d \mu(x)
  $$
among of the mappings $r \colon \mathbb{R}^d \mapsto \mathbb{R}^d$ pushing forward $\mu$ onto $\nu$; here
$\| \cdot\|$ is the standard Euclidean norm.  The corresponding minimal value 
coincides with the squared Kantorovich
distance $W^2_2(\mu,\nu).$

Now let us consider a couple of measures on an infinite-dimensional linear space $X$; to avoid unessential technicalities,
we will assume everywhere that $X= \mathbb{R}^{\infty}$.
We deal throughout with the standard
Hilbert norm
$$
\|x \|^2 : = \|x\|^2_{l^2} = \sum_{i=1}^{\infty} x^2_i,
$$
which takes infinite value almost everywhere with respect to most of the measures we are interested in. 

  What is a natural analog of the Brenier theorem in this setting?
To understand the situation better let us consider  the Gaussian model.

\begin{example}
Let $\gamma = \prod_{i=1}^{\infty} \gamma_i =  \prod_{i=1}^{\infty} \frac{1}{\sqrt{2 \pi}} e^{-\frac{x^2_i}{2}} \ d x_i$ be the standard Gaussian product measure on $\R^{\infty}$ and $H=l^2$
be the corresponding Cameron--Martin space. More generally, one can  consider any
abstract Wiener space.

The optimal transportation problem is well-understood for the case of measures $\mu$ and $\nu$ which are absolutely continuous with respect to  $\gamma$. The most general results were obtained in \cite{FU1}
(another approach has been developed in \cite{Kol04}).
In particular, for a broad class of  probability measures $f \cdot \gamma$ absolutely continuous w.r.t. $\gamma$ there exists a transportation mapping $T(x) = x + \nabla \varphi(x)$
minimizing the cost
$$
\int \| T(x) - x \|^2_{l^2}  \ d \gamma
$$
and pushing forward $\gamma$ onto $f \cdot \gamma$. Analogously, there exists a transportation mapping pushing forward $f \cdot \gamma$ onto $\gamma$. The gradient operator $\nabla$ is understood with respect to $\langle \cdot, \cdot \rangle_{l^2}$-scalar product.

It is known (this follows from the so-called Talagrand transportation inequality) that under assumption $\int f \log f \ d \gamma < \infty$  the Kantorovich distance between $\gamma$ and $f \cdot \gamma$ is finite
$$W^2_2(\gamma, f \cdot \gamma) = \int \| T(x) - x \|^2_{l^2}  \ d \gamma < \infty.
$$
In particular, $\nabla \varphi(x) \in l^2$ for $\gamma$-almost all $x$.
More on optimal transportation on the Wiener space, the corresponding Monge--Amp{\'e}re equation,
regularity issues, and transportation on other infinite-dimensional spaces
see  in  \cite{BoKo2005}, \cite{BoKo2011}, \cite{Caval},
 \cite{FangShao}, and \cite{FangNolot}.
\end{example}

It this paper we study situation when the Kantorovich distance between measures is a priori {\bf infinite}.  
This makes   impossible in general to understand $T$ as a solution
to a certain minimization problem.
Nevertheless, we have many good candidates to be called ''optimal transportation''
in many particular cases. The following example motivates our study.

\begin{example}
\label{simple-ex}
1) Let  $\mu = \prod_{i=1}^{\infty} \mu_i(dx_i)$, $\nu = \prod_{i=1}^{\infty} \nu_i(dx_i)$ be product probability measures.
Assume that all $\mu_i$ have densities. Then  there exists a mass transportation mapping $T$ pushing forward $\mu$ onto $\nu$ which has the form
$$T(x) = (T_1(x_1), \cdots, T_i(x_i), \cdots),$$
where $T_i(x_i)$ is the one-dimensional optimal transportation pushing forward $\mu_i$ onto $\nu_i$.

2)
Let us consider the Gaussian  measure
$\mu$ which is a push-forward image of the  standard Gaussian measure $\gamma$ under a linear mapping $T(x) =Ax$ with $A$ symmetric and positive.
It is well-known (and can be obtained from the  law of large numbers) that $\gamma$ and $\mu$ are mutually singular even in the simplest case $A=2 \cdot \mbox{Id}$.
$T$ is "optimal" because it is linear and  given by a positive symmetric operator. Heuristically,
$$
T(x) = \frac{1}{2} \nabla  \langle Ax, x \rangle.
$$
 It is clear that in both cases $T$ cannot be obtained as a minimizer of a
 functional of the type $\int \|T(x)-x\|^2_{l^2} \ d \mu$.
\end{example}

We state now  the central problem of this paper.

\begin{problem}
\label{mainproblem}
Let $\mu$ and $\nu$   be two probability measures  on $\mathbb{R}^{\infty}$.
 When does  exist a  transportation mapping $T$
 pushing forward  $\mu$ onto $\nu$ which is
''optimal'' for the cost function $c(x,y)=\|x-y\|^2_{l^2}$?
\end{problem}

In this paper we deal with two model situations.

{\bf Quasi-product measures.}

We assume that both measures have densities with respect to product probability measures
$$
\mu = f \cdot \mu_0, \ \nu = g \cdot \nu_0,
$$
$$
\mu_0 = \prod_{i=1}^{\infty} \mu_i(dx_i), \nu_0 = \prod_{i=1}^{\infty} \nu_i(dx_i).
$$
Then the corresponding  "optimal tranportation" is a small perturbation
of the diagonal mapping, considered in Example \ref{simple-ex}.

{\bf  Symmetric measures.}

It is possible to give a meaning to the Monge--Kantorovich optimization problem if we restrict ourselves to  a certain class of symmetric measures. In this paper we
consider two types of symmetry: exchangeable measures (invariant with respect to finite permutations of coordinates) and stationary measures on $\mathbb{R}^{\infty}$ (invariant with respect to shifts of coordinates).
Note that  $\|x-y\|^2_{l^2}$ is symmetric with respect to both types of symmetry. More generally, let $G$ be a group of linear operators  which acts
on $X=Y = \mathbb{R}^{\infty}$ and $X\times Y$: $x \to g x$, $(x,y) \to (gx, gy)$, $g \in G$ and preserves  the cost function $c(x,y)$.
We assume that every basic vector $e_j$ can be obtained from any other $e_i$ by 
action of this group: there exists $g \in G$ such that $e_i = g e_j$.
Note that under these assumptions all the coordinates are identically distributed.
This leads us to the following definition:
given $G$-invariant marginals $\mu$ and $\nu$ we call $\pi$ an
optimal (symmetric, invariant) solution to the Monge--Kantorovich problem if
$\pi$ solves the Monge--Kantorovich problem
$$
\int (x_1-y_1)^2 \ d \pi \to \min
$$
among all of the measures which are invariant with respect to $G$.
If  there exists a mapping $T$ such that its graph $\Gamma = \{x, T(x)\}$
satisfies $m(\Gamma)=1$, we say that $T$ is an
optimal transportation mapping pushing forward $\mu$ onto $\nu$.

The following counter-example, however, demonstrates that the 
optimal transportation may fail to exist by a quite simple reason.

\begin{example}
Let $\mu = \gamma$ be the standard Gaussian measure on $\mathbb{R}^{\infty}$ and
$$\nu = \frac{1}{2} (\gamma + \gamma_2)$$ be the average of $\gamma$ and its homothetic image
$\gamma_2 = \gamma \circ S^{-1}$, where $S(x)=2x$. There is no any mass transportation  $T$ of $\mu$ to $\nu$ which  commutes with any cylindrical rotation.
Indeed, any mapping of such a type must have the form $T(x) = g (x) (x_1, x_2, \cdots) = g(x) \cdot x$, where $g$ is invariant with respect to any "rotation", in particular, with respect to any coordinate permutation.
But any function $g$ of this type is constant $\gamma$-a.e. This is a corollary of the Hewitt--Savage $0-1$ law. It is clear that there is no any mass transportation of this type for the given target measure.
\end{example}

There is a general principle behind of this simple example. Recall that a measure $\mu$ is called ergodic with respect to a group action $G$, if for every $G$-invariant set $A$
one has either $\mu(A)=1$ or $\mu(A)=0$. It follows directly from the definition that {\it there does not exists a  bijective mass transportation $T$ pushing forwarg $\mu$ onto $\nu$, such that $T \circ g = g \circ T$ for every $g \in G$,  provided $\mu$
 is $G$-ergodic but $\nu$ is not}.

This observation leads to the following problem.

{\bf Problem.}
Let $G$ be a group of linear operators acting on $\mathbb{R}^{\infty}$ and preserving $l_2$-distance
(model example: group of shifts). Let $\mu, \nu$ be {\bf ergodic} $G$-invariant measures.
When does exist a transportation $T \colon \mathbb{R}^{\infty} \mapsto \mathbb{R}^{\infty}$
pushing forward $\mu$ onto $\mu$, which commutes with $G$ and gives minimum to the Monge functional
$ T \mapsto \int_{\mathbb{R}^{\infty}} (T_1(x) - x_1)^2 \ d\mu$?

Trivially, the ergodicity by itself is not sufficient for the affirmative answer to this problem. In addition to it, we need to have certain infinite-dimesional 
analogs of  ''absolute continuity'' for the source measure $\mu$.

We believe that the symmetric transportation problem must have deep
and very interesting relation with the ergodic theory. The second named author studied the interplay between ergodic decompositions and transportation theory  in \cite{Zaev2}.
Another  interesting connection has been established in  \cite{B}. It was shown that the Birkhoff ergodic theorem implies
  equivalence between optimality and the so-called  cyclical monotonicity property. 
The related problems on optimal transportation in symmetric settings have been considered in \cite{RS} (stationary processes), in \cite{Vershik} (symmetric measures on graphs),
and in \cite{LM}, \cite{LOT}, \cite{CLO} (ergodic theory). 
Transportation problems with symmetries  have been studied in \cite{GhMau}, \cite{Moameni}. Further development of the  duality theory for transportation problem  with linear restriction has been obtained  in   \cite{Zaev1}.

The paper is organized as follows: in Section 2 we give preliminaries in transportation theory, ergodic theory, and recall some important results 
on log-concave measures. In Section 3 we establish sufficient conditions for existence
of optimal transportation mappings which are obtained as a.e.-limits of finite-dimensional  approximations. The applications of this result are obtained in Section 4.
Here we prove existence of optimal transportation for a couple of measures having densities with respect to product measures.
In Section 5 we discuss the invariant optimal transportation problem, consider examples and prove some basic facts.
In Section 6 we briefly discuss Kantorovich duality for problem which is invariant with respect to the action of a group.
In Section 7 we construct a non-trivial example of a symmetric optimal  transportation $T$. Namely, we establish  sufficient conditions for existence of $T$
pushing forward a stationary measure into the standard Gaussian measure. Finally, we apply this result to a certain class of Gibbs measures.

\newpage 

\section{Preliminaries}

\subsection{Optimal transportation problem}

\

{\bf Kantorovich problem.} Given two probability measures $\mu$ and $\nu$ on the spaces $X$ and $Y$ respectively,
and a cost function $c: X \times Y \mapsto \mathbb{R} \cup \{+\infty\}$ we are looking for the  minimum of the functional
$$
W^2_2(\mu,\nu) = \inf \Bigl\{ \int \|x-y\|^2 \ d m \colon m \in P(\mu,\nu)  \Bigr\},
$$
on the space $P(\mu,\nu)$ of  probability measures  with fixed projections:
$Pr_X m=\mu, Pr_Y m = \nu$.

In the classical setup $X=Y = \mathbb{R}^n$, $c=|x-y|^2$
the solution  $m$ is supported on the graph of a mapping $T : \mathbb{R}^n \mapsto \mathbb{R}^n$:
$$
m(\Gamma)=1, \ \ \mbox{where} \ \ \Gamma = \{(x,T(x)), \ x \in \R^d\}.
$$
(see \cite{AGS}, \cite{BoKo2012}, \cite{Vill}.).
The functional $W_2(\mu,\nu)$ is a distance in the space of probability measures.
In what follows we call it the Kantorovich distance.
The mapping $T$ is called optimal transportation of $\mu$ onto $\nu$.

Another well-known fact which  will be used throughout the paper is the following
relation called the Kantorovich duality:
$$
W_2(\mu,\nu)=-\frac{1}{2}J(\varphi,\psi),
$$
where $$J(\varphi,\psi) = \inf_{\varphi,\psi} \Bigl\{\int \Bigl( \varphi(x)  - \frac{x^2}{2} \Bigr)\ d \mu + \int  \Bigl(\psi(y) - \frac{y^2}{2} \Bigr) \ d \nu, \ \  \varphi(x) + \psi(y) \ge \langle x, y \rangle\Bigr\},$$
where the infimum is taken over couples of integrable Borel functions $\varphi(x), \psi(y)$.
The function $\varphi$ in the dual problem coincides with the potential generating the transportation mapping
$$T = \nabla \varphi.$$

\subsection{Ergodic decomposition}

Given a Borel transformation $S : X \mapsto X$ of the space $X$ we call a Borel probability measure $\mu$ ergodic 
if any $S$-invariant measurable set $A$ has the property $\mu(A)=1$ or $\mu(A)=0$.
A similar tegrminology is used if instead of a single mapping $S$ we deal with a family $G$ of transformations.

The ergodic $G$-invariant measures are extreme points of the set of all $G$-invariant measures, hence
any $G$-invariant measure can be represented as the average of $G$-invariant ergodic measures.
The famous de Finetti theorem establishes decomposition of this type for a class of exchangeable measures, i.e.
measures, invariant with respect to a permutation of a finite number of coordinates.

\begin{theorem}
Let $\mathcal{P}$ be the space of  Borel probability measures on $\mathbb{R}$
equipped with the  weak topology. Then for every Borel exchangeable $\mu$ on $\mathbb{R}^{\infty}$
there exists a Borel probability measure $\Pi$ on $\mathcal{P}$ such that
$$
\mu(B) = \int m^{\infty}(B) \Pi(dm),
$$
for every Borel $B \subset \mathbb{R}^{\infty}$.
\end{theorem}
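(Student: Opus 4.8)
The plan is to recover the mixing measure $\Pi$ as the distribution, under $\mu$, of the random empirical measure of the coordinate sequence, via a reverse martingale argument. Work with the coordinate process $x=(x_1,x_2,\dots)$ on $\mathbb{R}^\infty$ under $\mu$ and put
$$
L_n=L_n(x)=\frac1n\sum_{i=1}^n\delta_{x_i}\in\PP(\mathbb{R}).
$$
Let $\mathcal{S}_n$ be the $\sigma$-algebra of events invariant under all permutations of $\{1,\dots,n\}$; these form a decreasing family $\mathcal{S}_1\supset\mathcal{S}_2\supset\cdots$ whose intersection is the exchangeable $\sigma$-algebra $\mathcal{S}$. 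First I would show that for every bounded continuous $f\colon\mathbb{R}\to\mathbb{R}$
$$
\int f\,dL_n\ \longrightarrow\ \dE_\mu\big[f(x_1)\mid\mathcal{S}\big]\qquad\mu\text{-a.s.\ and in }L^1(\mu).
$$
The point is that exchangeability forces $\dE_\mu[f(x_j)\mid\mathcal{S}_n]=\dE_\mu[f(x_1)\mid\mathcal{S}_n]$ for all $j\le n$ (apply the transposition $(1\,j)$, which fixes every $\mathcal{S}_n$-measurable function), whence $\dE_\mu[f(x_1)\mid\mathcal{S}_n]=\int f\,dL_n$ because $\int f\,dL_n$ is already $\mathcal{S}_n$-measurable; thus $(\int f\,dL_n)_n$ is a reverse martingale for $(\mathcal{S}_n)_n$ and converges by the reverse martingale theorem, the limit being $\dE_\mu[f(x_1)\mid\mathcal{S}]$. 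Fixing a countable convergence-determining family $\{f_k\}$ of bounded continuous functions and deleting a $\mu$-null set, the map $f\mapsto\lim_n\int f\,dL_n$ is, off that null set, positive, linear and normalized; since $\mathbb{R}$ is Polish the space $\PP(\mathbb{R})$ is a standard Borel space, and this produces an $\mathcal{S}$-measurable map $x\mapsto m(x)\in\PP(\mathbb{R})$ with $\int f\,dm(x)=\lim_n\int f\,dL_n(x)$ for all bounded continuous $f$.

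Next I would establish the conditional i.i.d.\ structure: for bounded continuous $f_1,\dots,f_k$,
$$
\dE_\mu\Big[\prod_{j=1}^k f_j(x_j)\ \Big|\ \mathcal{S}\Big]=\prod_{j=1}^k\int f_j\,dm\qquad\mu\text{-a.s.}
$$
For $n\ge k$, exchangeability lets one replace, inside $\dE_\mu[\,\cdot\mid\mathcal{S}_n]$, the integrand $\prod_j f_j(x_j)$ by its average over all injections $\{1,\dots,k\}\hookrightarrow\{1,\dots,n\}$ (every such tuple has the same $\mathcal{S}_n$-conditional expectation). That average equals $\prod_{j=1}^k\int f_j\,dL_n$ up to the contribution of the non-injective tuples, which is $O(1/n)$ uniformly since the $f_j$ are bounded; letting $n\to\infty$ and using Step~1 together with bounded convergence yields the displayed identity. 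Now set $\Pi:=\mu\circ m^{-1}$, a Borel probability measure on $\PP(\mathbb{R})$. Applying the identity with $f_j$ approximating indicators $\1_{B_j}$ of, say, open sets and passing to the limit, for every cylinder $B=\{x:x_1\in B_1,\dots,x_k\in B_k\}$ we get
$$
\mu(B)=\dE_\mu\Big[\prod_{j=1}^k m(B_j)\Big]=\int_{\PP(\mathbb{R})}m^\infty(B)\,\Pi(dm).
$$
Since such cylinders form a $\pi$-system generating the Borel $\sigma$-algebra of $\mathbb{R}^\infty$, the monotone class theorem upgrades this to all Borel $B$, which is the assertion.

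The \emph{main obstacle} is the rigorous construction in Step~1 of the $\PP(\mathbb{R})$-valued random variable $m$ — converting almost-sure convergence along a countable determining class into a genuine measurable selection of probability measures — together with the uniform $O(1/n)$ bound and the interchange of limit and conditional expectation in Step~2. Both are routine precisely because $\mathbb{R}$ is Polish (hence $\PP(\mathbb{R})$ is Polish and second countable) and because only bounded continuous test functions enter, which is also why no moment hypothesis on $\mu$ is needed. An alternative route is to identify the extreme points of the convex set of exchangeable probability measures on $\mathbb{R}^\infty$ with the countable powers $m^\infty$ and then invoke Choquet's theorem; the martingale argument sketched here is, however, more self-contained.
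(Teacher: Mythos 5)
The paper does not actually prove this statement: it is quoted as a known result with a pointer to Bogachev's \emph{Measure Theory}, Theorem 10.10.19, so there is no in-paper argument to compare yours with. Your reverse-martingale proof is the standard self-contained route to De Finetti's theorem and is essentially correct: the identification $\mathbb{E}_\mu[f(x_1)\mid\mathcal{S}_n]=\frac1n\sum_{i\le n}f(x_i)$, the backward martingale convergence, the averaging over injections with the $O(1/n)$ error from non-injective tuples, and the $\pi$-system/monotone-class upgrade from cylinders to all Borel sets are all sound. The one place that deserves more than a wave is the construction of the random measure $m(x)$ in your Step 1: a positive, linear, normalized functional defined (off a null set) on a countable family of bounded continuous functions need not extend to a \emph{countably additive} probability measure on $\mathbb{R}$ — mass can escape to infinity, and $C_b(\mathbb{R})^*$ contains purely finitely additive functionals. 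The clean repair is to bypass the functional-analytic extension entirely: since $\mathbb{R}$ is Polish, take $m(x)$ to be a regular conditional distribution of $x_1$ given the exchangeable $\sigma$-algebra $\mathcal{S}$ (which exists and is an $\mathcal{S}$-measurable map into $\mathcal{P}(\mathbb{R})$), and then use your Step 1 only to identify $\lim_n\int f\,dL_n$ with $\int f\,dm(x)$ for each $f$ in the countable family; alternatively, verify tightness of the limiting functional directly via $\lim_n L_n([-R,R]^c)=\mathbb{E}_\mu[\mathbf{1}_{\{|x_1|>R\}}\mid\mathcal{S}]\to 0$ as $R\to\infty$. With that adjustment the argument is complete, and it has the merit of being more elementary than the Choquet-type extreme-point decomposition you mention as an alternative.
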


Yet another example of the ergodic decomposition where a precise description is possible is given by
rotationally invariant measures (see Example \ref{Rim}). 

\subsection{Log-concave measures and functional inequalities}

We recall that a probability measure $\mu$  on $\mathbb{R}^n$ is called log-concave if it has the form $e^{-V} \cdot \mathcal{H}^{k}|_{L}$, where $\mathcal{H}^k$ is the $k$-dimensional Hausdorff
measure, $k \in \{0,1, \cdots, n\}$, $L$ is an affine subspace,  and $V$ is a convex function. 

In what follows we consider uniformly log-concave measures. Roughly speaking, these  are the measures with potential $V$
satisfying 
$$
V(x) - V(y) - \langle \nabla V(y), x-y \rangle \ge \frac{K}{2}|x-y|^2,
$$ 
which is equivalent to $D^2 V \ge K \cdot \mbox{Id}$ in the smooth (finite-dimensional) case. Here $K$ is a positive constant.

More precisely, we say that a probability measure $\mu$ is $K$-uniformly log-concave  ($K > 0$) if for any $\varepsilon>0$ the measure $\hat{\mu} = \frac{1}{Z} e^{\frac{K-\varepsilon}{2} |x|^2} \cdot \mu$ is  log-concave  for a suitable renormalization factor $Z$.
It is well-known (C. Borell) that the projections  of log-concave measures are log-concave (this is in fact a corollary of the Brunn-Minkowski theorem). It can be easily checked that the uniform log-concavity is preserved by projections as well.
We can extend this notion to the infinite-dimensional case. Namely, we call a probability measure $\mu$ on a locally convex space $X$ log-concave ($K$-uniformly log-concave with $K>0$)  if its images  $\mu \circ l^{-1}$, $l \in X^*$ under linear continuous
functionals  are  all log-concave ($K$-uniformly log-concave with  $K>0$).

Throughout the paper we apply the following estimate (see  \cite{Kol04}, \cite{Kol2010}), which generalizes the famous Talagrand
transportation inequality.

\begin{theorem}
\label{generalTalagrand} ({\bf Generalized Talagrand inequality.})
Let $m$ be a $K$-uniformly log-concave probability measure with some  $K>0$. Then for any couple of probability measures  $\mu = e^{-V} \ dx$, $\nu = e^{-W} \ dx $
and the corresponding  optimal mappings $\nabla \varphi_{\mu}$, $\nabla \varphi_{\nu}$,
pushing forward $\mu$, $\nu$ onto $m$ respectively,
one has the following estimate
$$
\mbox{\rm {Ent}}_{\nu} \Bigl( \frac{\mu}{\nu} \Bigr)  = \int  \log \frac{d\mu}{d\nu} \ d \mu  = \int  (W-V) \ d \mu \ge
\frac{K}{2} \int \bigl( \nabla \varphi_{\mu} - \nabla \varphi_{\nu} \bigr)^2 \ d \mu.
$$
\end{theorem}

Another result used in the paper is the  Cafarelli's contraction theorem. Here is the version from \cite{Kol2010} (see also \cite{Kol-contr}).
\begin{theorem} {\bf (Caffarelli contraction theorem).}
Let $\nabla \Phi$ be the optimal transportation of the probability measure
$\mu = e^{-V} dx$ into $\nu = e^{-W} dx$. Assume that for some positive $c, C$ one has
$D^2 V \le C \cdot \rm{Id}$, $D^2 W \ge c \cdot \rm{Id}$. Then $\nabla \Phi$ is Lipschitz with 
$\| \nabla \Phi\|_{Lip} \le \sqrt{\frac{C}{c}}$.   
\end{theorem}

The quantity $\mbox{\rm {Ent}}_{\nu} \Bigl( \frac{\mu}{\nu} \Bigr)$ is called the
relative entropy or the Kullback-Leibler distance between $\mu$ and $\nu$.

\section{Sufficient condition for existence of limits of finite-dimensional optimal mappings}

\subsection{Preliminary finite-dimensional estimates}

Let $\mu$ and $\nu$ be probability measures on $\mathbb{R}^d$
and $T(x) = \nabla \varphi(x)$ be  the optimal transportation mapping pushing forward $\mu$ onto $\nu$.
Let us denote by $\mu_v$ the images of $\mu$ under the shifts  $ x \mapsto x + v$, $v \in \mathbb{R}^d$.

It will be assumed throughout that  $\mu_v$   have densities with respect to $\mu$:
$$
\frac{d \mu_v}{d \mu} = e^{\beta_v}.
$$

\begin{lemma}
\label{L2-D2}
For every $p, q \ge 1$ with $\frac{1}{p} + \frac{1}{q}=1$,  $\varepsilon \ge 0$,
and $ e \in \mathbb{R}^d$
$$
\int  |\varphi(x+ te) - \varphi(x)|^{1+\varepsilon} \ \ d \mu
\le  t^{1+\varepsilon} \|  \ |\langle x, e \rangle|^{1+\varepsilon} \|_{L^{p}(\nu)} \cdot \sup_{0 \le s \le t}  \| e^{\beta_{se}} \|_{L^{q}(\mu)}.
$$
$$
\int \bigl(\varphi(x+ te) - \varphi(x)  - t  \partial_e \varphi(x) \bigr) \ d \mu \le  t  \| \langle x, e \rangle \|_{L^p(\nu)} \cdot  \sup_{0 \le s \le t}  \| e^{\beta_{se}} -1 \|_{L^{q}(\mu)}.
$$
\end{lemma}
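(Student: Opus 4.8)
The plan is to use the convexity of $\varphi$ to represent the increment $\varphi(x+te)-\varphi(x)$ as an integral of directional derivatives along the segment $[x,x+te]$, then to push $\mu$ forward by the shifts $y\mapsto y+se$ so that the density $e^{\beta_{se}}$ appears, and finally to apply Hölder's inequality together with the fact that $T=\nabla\varphi$ transforms $\mu$ onto $\nu$.

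First I would note that for $\mu$-a.e.\ $x$ the convex function $s\mapsto\varphi(x+se)$ is absolutely continuous on $[0,t]$, so that
$$
\varphi(x+te)-\varphi(x)=\int_0^t \langle T(x+se),e\rangle\, ds .
$$
For the first estimate, I would apply the triangle inequality in $s$ and Jensen's inequality for the convex function $u\mapsto u^{1+\varepsilon}$ with respect to the probability measure $ds/t$ on $[0,t]$, which gives
$$
|\varphi(x+te)-\varphi(x)|^{1+\varepsilon}\le t^{\varepsilon}\int_0^t |\langle T(x+se),e\rangle|^{1+\varepsilon}\, ds .
$$
Integrating in $d\mu$, interchanging the integrals by Tonelli (the integrand is nonnegative), applying inside the change of variables $\int g(x+se)\, d\mu=\int g\, e^{\beta_{se}}\, d\mu$, then Hölder with exponents $p,q$, and finally using that $T$ transforms $\mu$ onto $\nu$ to replace $\big\| |\langle T(\cdot),e\rangle|^{1+\varepsilon}\big\|_{L^{p}(\mu)}$ by $\big\| |\langle\cdot,e\rangle|^{1+\varepsilon}\big\|_{L^{p}(\nu)}$ and bounding $\|e^{\beta_{se}}\|_{L^{q}(\mu)}$ by its supremum over $s\in[0,t]$, yields the first inequality, the factor $t^{\varepsilon}\cdot t$ producing the claimed $t^{1+\varepsilon}$.

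For the second estimate I would start from the same representation and write
$$
\varphi(x+te)-\varphi(x)-t\,\partial_e\varphi(x)=\int_0^t\bigl(\langle T(x+se),e\rangle-\langle T(x),e\rangle\bigr)\, ds ,
$$
integrate in $d\mu$, and interchange the order of integration (Fubini is legitimate once the right-hand side of the claimed bound is finite; otherwise there is nothing to prove). After the change of variables the inner integral equals $\int\langle T(y),e\rangle\,(e^{\beta_{se}(y)}-1)\, d\mu(y)$; bounding it by Hölder, rewriting the $L^{p}(\mu)$-norm of $\langle T(\cdot),e\rangle$ as $\|\langle\cdot,e\rangle\|_{L^{p}(\nu)}$ (again because $T$ transforms $\mu$ onto $\nu$), taking the supremum over $s$, and integrating in $s$ over $[0,t]$ gives the claim.

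The only point requiring care is the fundamental-theorem-of-calculus step: one needs that for $\mu$-a.e.\ $x$ the map $s\mapsto\varphi(x+se)$ is differentiable for a.e.\ $s\in[0,t]$ with derivative $\langle\nabla\varphi(x+se),e\rangle$. This follows by a Fubini argument along the direction $e$ applied to the Lebesgue-null non-differentiability set of the convex function $\varphi$, together with the standing assumption that $\mu$ (hence its projection onto $e^{\perp}$) is absolutely continuous with respect to Lebesgue measure; the absolute continuity of $\mu_{se}$ with respect to $\mu$ is what keeps these exceptional sets negligible for the relevant values of $s$. Everything else in the argument — Jensen, Hölder, Tonelli/Fubini, and the change of variables — is routine.
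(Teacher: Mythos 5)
Your proposal is correct and follows essentially the same route as the paper: the representation $\varphi(x+te)-\varphi(x)=\int_0^t\partial_e\varphi(x+se)\,ds$, Jensen in $s$, Tonelli, the change of variables producing the density $e^{\beta_{se}}$, H\"older with exponents $p,q$, and the push-forward identity $\|\partial_e\varphi\|_{L^p(\mu)}=\|\langle\cdot,e\rangle\|_{L^p(\nu)}$ are exactly the steps used there, for both inequalities. Your additional remark on the a.e.\ differentiability of the convex potential along the direction $e$ is a point the paper leaves implicit, and it is handled correctly.
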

\begin{proof}
One has
$
\varphi(x+ te) - \varphi(x) = \int_{0}^{t} \partial_{e} \varphi(x + s e) \ ds.
$
Hence
\begin{align*}
\int &  |\varphi(x+ te) - \varphi(x)|^{1+\varepsilon} \ d \mu
\le t^{\varepsilon} \int \int_0^ t |\partial_e\varphi |^{1+\varepsilon} (x + s e) \ ds \ d \mu
\\& =   t^{\varepsilon}  \int_0^t \Bigl[ \int | \partial_e\varphi|^{1+\varepsilon}  e^{\beta_{se}} \ d \mu \Bigr] \ ds
 \le  t^{1+\varepsilon}   \| |\partial_e \varphi|^{1+\varepsilon}  \|_{L^{p}(\mu)} \cdot \sup_{0 \le s \le t}  \| e^{\beta_{se}} \|_{L^{q}(\mu)}
 \\& = t^{1+\varepsilon}  \| \  |\langle x, e \rangle|^{1+\varepsilon}  \|_{L^{p}(\nu)} \cdot \sup_{0 \le s \le t}  \| e^{\beta_{se}} \|_{L^{q}(\mu)}.
\end{align*}
Applying the same arguments one gets
\begin{align*}
\int & \bigl(\varphi(x+ te) - \varphi(x)  - t \partial_e \varphi(x) \bigr) \ d \mu = \int \int_0^t (\partial_e\varphi(x+se) -  \partial_e \varphi(x)) \ ds \ d \mu
\\&
= \int \Bigl[ \int_0^t (e^{\beta_{se}}-1) \ ds \Bigr] \partial_e \varphi(x) \ d\mu
\le
 t^{\frac{1}{p}} \| \partial_e \varphi \|_{L^p(\mu)} \Bigl[ \int \int_0^t  |e^{\beta_{se}}-1|^q \ ds \ d \mu \Bigr]^{\frac{1}{q}}.
\end{align*}
The desired estimate follows from the the change of variables formula and trivial uniform bounds.
\end{proof}

In addition, we will apply the following elementary Lemma.

\begin{lemma}
\label{conv-lemm}
Assume that a sequence $\{T_n\}$ of measurable mappings $T_n \colon \mathbb{R}^{\infty} \to  \mathbb{R}^{\infty}$  converges to a mapping $T$ in the following sense: for every $e_i$
$\lim_n \langle T_n, e_i \rangle  = \langle T, e_i \rangle$ in measure with respect to $\mu$. Then the measures $\{\mu \circ T^{-1}_n\}$ converge weakly to $\mu \circ T^{-1}$.
\end{lemma}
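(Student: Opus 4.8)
The plan is to test weak convergence against bounded continuous functions and to reduce everything to the behaviour of finitely many coordinates. Write $\mu_n = \mu \circ T_n^{-1}$ and $\mu_\infty = \mu \circ T^{-1}$; by the change of variables formula, for any bounded continuous $f$ on $\mathbb{R}^{\infty}$ one has $\int f\, d\mu_n = \int f(T_n)\, d\mu$, so it suffices to show $\int f(T_n)\, d\mu \to \int f(T)\, d\mu$.

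First I would treat the cylindrical case: $f(x) = g(x_1, \dots, x_k)$ with $g$ bounded and continuous on $\mathbb{R}^k$. By hypothesis $\langle T_n, e_i\rangle \to \langle T, e_i\rangle$ in $\mu$-measure for each $i \le k$, hence the $\mathbb{R}^k$-valued maps $(\langle T_n, e_1\rangle, \dots, \langle T_n, e_k\rangle)$ converge in $\mu$-measure to $(\langle T, e_1\rangle, \dots, \langle T, e_k\rangle)$. Since $g$ is continuous, a standard subsequence argument (every subsequence has a further $\mu$-a.e. convergent subsequence, along which $g$ composed with these maps converges $\mu$-a.e. by continuity, hence in measure) gives $f(T_n) \to f(T)$ in $\mu$-measure; as $g$ is bounded, the bounded convergence theorem yields $\int f(T_n)\, d\mu \to \int f(T)\, d\mu$. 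In particular, all finite-dimensional marginals of $\mu_n$ converge weakly to those of $\mu_\infty$.

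It remains to upgrade convergence of finite-dimensional marginals to weak convergence on $\mathbb{R}^{\infty}$, which (with the product topology) is a Polish space. For this I would establish tightness of $\{\mu_n\}$: for each coordinate $i$, the one-dimensional marginals $P_i(\mu_n)$ converge, so together with their limit they form a tight family on $\mathbb{R}$, and we may pick a compact $K_i \subset \mathbb{R}$ with $\sup_n \mu_n(\{x : x_i \notin K_i\}) \le \varepsilon 2^{-i}$; then $K = \prod_i K_i$ is compact in $\mathbb{R}^{\infty}$ and $\mu_n(\mathbb{R}^{\infty} \setminus K) \le \varepsilon$ for all $n$. Given tightness, Prokhorov's theorem shows every subsequence of $\{\mu_n\}$ has a weakly convergent sub-subsequence, and any such weak limit has the same finite-dimensional marginals as $\mu_\infty$ (by the cylindrical case), hence equals $\mu_\infty$; therefore the whole sequence converges weakly to $\mu_\infty$.

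The only mildly delicate point is precisely the passage from finite-dimensional marginals to the infinite product — bounded continuous cylinder functions are not a priori convergence-determining on a non-compact product space — but the coordinatewise tightness above closes this gap, and everything else is routine.
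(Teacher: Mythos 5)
Your proof is correct. The paper states Lemma \ref{conv-lemm} as an ``elementary Lemma'' and gives no proof at all, so there is no argument of the authors' to compare against; your write-up supplies exactly the standard argument one would expect. The two steps are both sound: (i) for a bounded continuous cylindrical $f=g(x_1,\dots,x_k)$, coordinatewise convergence in $\mu$-measure of finitely many coordinates gives convergence in measure of the $\mathbb{R}^k$-valued maps, hence $f(T_n)\to f(T)$ in measure by the a.e.-subsequence principle, and bounded convergence finishes; (ii) the coordinatewise tightness construction $K=\prod_i K_i$ with $\mu_n(x_i\notin K_i)\le \varepsilon 2^{-i}$ is a valid use of Tychonoff compactness in the product topology, and combined with Prokhorov and the fact that a Borel measure on $\mathbb{R}^{\infty}$ is determined by its finite-dimensional marginals (cylinder sets form a generating $\pi$-system) it identifies every subsequential limit as $\mu\circ T^{-1}$. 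One small remark: step (ii) can be shortened, since on $\mathbb{R}^{\infty}$ with the product (Polish) topology weak convergence of probability measures is in fact \emph{equivalent} to weak convergence of all finite-dimensional marginals --- any bounded uniformly continuous function for the metric $\sum_i 2^{-i}\min(|x_i-y_i|,1)$ is uniformly approximated by cylindrical functions --- so cylinder functions are convergence-determining here and the explicit tightness argument, while perfectly correct, is not strictly needed.
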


\subsection{Existence theorem} We consider a couple of Borel probability measures $\mu$ and $\nu$ on $\mathbb{R}^{\infty}$, where $\mathbb{R}^{\infty}$
is the space of all real sequences: $\mathbb{R}^{\infty} = \prod_{i=1}^{\infty} \mathbb{R}_i$.
We deal with the standard coordinate system $x = (x_1, x_2, \cdots, x_n, \cdots)$ and the standard basis vectors $e_i = (\delta_{ij})$. The projection
on the first $n$ coordinates will be denoted by $P_n$:
$P_n(x) = (x_1, \cdots, x_n)$. We use  notations $\| x \|$, $\langle x, y \rangle$ for the
Hilbert space norm and inner product: $\|x\| = \sum_{i=1}^{\infty} x^2_i$, $\langle x, y \rangle = \sum_{i=1}^{\infty} x_i y_i$.
We use notation $\Exp^{n}_{\mu}$ for the conditional expectation with respect to $\mu$ and the $\sigma$-algebra generated by $x_1, \cdots, x_n$.
For any product measure  $P = \prod_{i=1}^{\infty} p_i(x_i) \ dx_i$ its projection $P_n = P \circ P^{-1}_n$   has the form  $ \prod_{i=1}^{n} p_i(x_i) \ dx_i$  and
the projection $(f \cdot P) \circ P^{-1}_n = f_n \cdot  P_n$ of the measure $f \cdot P$ satisfies $f_n = \Exp^{n}_{P} f$.
Everywhere below we agree that every cylindrical function $f = f(x_1, \cdots, x_n)$ can be extended to $\mathbb{R}^\infty$
by the formula $x \to f_n(P_n x)$.

It will be assumed throughout the paper  that the shifts of $\mu$ along any vector $v = t e_i$  are absolutely continuous with respect to $\mu$:
$$
\frac{d \mu_v}{d \mu} = e^{\beta_v}.
$$
In Section 3,  moreover, the following assumption holds.

{\bf Assumption (A).} For every basic vector  $e=e_i$ there exist $p \ge 1$, $q  \ge 1$, satisfying $\frac{1}{p} + \frac{1}{q} =1$, and $\varepsilon>0$  such that
$$ \int |\langle x, e \rangle|^{(1+\varepsilon)p} \ d \nu < \infty
$$
and
$$
 p(t) =  \sup_{0 \le s \le t}   \int |e^{\beta_{se}}-1|^q \ d \mu
$$
satisfies $\lim_{t \to 0} p(t) =0$.

Let $\mu_n = \mu \circ P^{-1}_n(x)$, $\nu_n = \nu \circ P^{-1}_n(y)$ be the projections of $\mu$, $\nu$.
For every $v = t e_i$ let us set
$$
\frac{d (\mu_n)_v}{d \mu_n} = e^{\beta^{(n)}_{v}}.
$$
It is easy to check that  the projections of  $\mu, \nu$  satisfy Assumption (A).
\begin{lemma}
\label{proj}
For every $n \in \mathbb{N}$ and every  $e=e_i$ one has
$$
 \int |\langle P_n(x), e \rangle|^p \ d \nu_n \le \int |\langle x, e \rangle|^p \ d \nu, \ \ \  \int |e^{\beta^{(n)}_{e}}-1|^q \ d \mu_n \le   \int |e^{\beta_{e}}-1|^q \ d \mu.
$$
\end{lemma}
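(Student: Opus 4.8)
The plan is to handle the two inequalities separately, in each case first disposing of the basic vectors $e=e_i$ with $i>n$. For such $e$ one has $\langle P_n(x),e_i\rangle=0$, so the left-hand side of the first inequality is $0$; moreover the translation by $te_i$ leaves the first $n$ coordinates fixed, so $\mu_n$ is invariant under it and $\beta^{(n)}_{e_i}\equiv 0$, so the left-hand side of the second inequality is also $0$. Both inequalities are then trivial, and it remains to treat $i\le n$. The structural fact underlying both parts is that the translation $x\mapsto x+te_i$ on $\mathbb{R}^\infty$ intertwines with $P_n$, namely $P_n(x+te_i)=P_n(x)+tP_n(e_i)$, from which I would record the identity of measures on $\mathbb{R}^n$, valid for $v=te_i$,
$$
(\mu_v)\circ P_n^{-1} = (\mu_n)_v .
$$

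For the first inequality, with $i\le n$ one has $\langle P_n(x),e_i\rangle = x_i = \langle x,e_i\rangle$, and since $\nu_n=\nu\circ P_n^{-1}$, the image-measure (change of variables) formula gives $\int|\langle P_n(x),e_i\rangle|^p\,d\nu_n = \int|x_i|^p\,d\nu_n = \int|x_i|^p\,d\nu = \int|\langle x,e_i\rangle|^p\,d\nu$, so that the asserted inequality in fact holds with equality.

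For the second inequality I would first identify the projected density. Writing $\mu_v=e^{\beta_v}\cdot\mu$ and using the intertwining identity, for every bounded cylindrical $\phi$ on $\mathbb{R}^n$ one computes $\int\phi\,d(\mu_n)_v = \int\phi(P_n x)\,e^{\beta_v}\,d\mu = \int\phi(P_n x)\,\Exp^{n}_{\mu}(e^{\beta_v})\,d\mu = \int\phi\,\Exp^{n}_{\mu}(e^{\beta_v})\,d\mu_n$, where I used that $\phi\circ P_n$ and $\Exp^{n}_{\mu}(e^{\beta_v})$ are both measurable with respect to $\sigma(x_1,\dots,x_n)$. Hence $e^{\beta^{(n)}_v}=\Exp^{n}_{\mu}(e^{\beta_v})$. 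Taking $v=te_i$ and using $1=\Exp^{n}_{\mu}(1)$, the conditional Jensen inequality applied to the convex function $t\mapsto|t|^q$ (here $q\ge1$) gives $\bigl|e^{\beta^{(n)}_{e}}-1\bigr|^q = \bigl|\Exp^{n}_{\mu}(e^{\beta_e}-1)\bigr|^q \le \Exp^{n}_{\mu}(|e^{\beta_e}-1|^q)$ $\mu$-almost everywhere; integrating this against $\mu_n$ (equivalently against $\mu$, since both sides are cylindrical functions of $x_1,\dots,x_n$) and invoking the tower property $\int\Exp^{n}_{\mu}(h)\,d\mu=\int h\,d\mu$ yields $\int\bigl|e^{\beta^{(n)}_{e}}-1\bigr|^q\,d\mu_n \le \int|e^{\beta_e}-1|^q\,d\mu$.

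I expect the only genuine point of care to be the identity $e^{\beta^{(n)}_v}=\Exp^{n}_{\mu}(e^{\beta_v})$, i.e. the assertion that pushing $e^{\beta_v}\cdot\mu$ forward by $P_n$ produces the measure with density $\Exp^{n}_{\mu}(e^{\beta_v})$ relative to $\mu_n$; this is precisely where the commutation of the coordinate shift with $P_n$ is used. Once that is in place, the remainder reduces to the conditional Jensen inequality together with the tower property of conditional expectation, both entirely routine.
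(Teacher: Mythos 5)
Your proof is correct and follows essentially the same route as the paper: the first bound is an equality by the change-of-variables formula, and the second rests on the identity $e^{\beta^{(n)}_{v}}=\Exp^{n}_{\mu}e^{\beta_{v}}$ combined with the (conditional) Jensen inequality for the convex function $t\mapsto|t-1|^q$. You merely supply the routine details (the intertwining of the shift with $P_n$ and the case $i>n$) that the paper leaves implicit.
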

\begin{proof}
The first estimate is trivial. To prove the second one, let us note that
$ e^{\beta^{(n)}_{v}} =   \Exp^{n}_{\mu}  e^{\beta_{v}} $. The claim follows from the Jensen inequality
and convexity of the function $t \to |t-1|^q$.
\end{proof}

We denote by $\pi_n$ the optimal transportation plan for the couple $(\mu_n, \nu_n)$.
Let $\varphi_n(x)$ and $\psi_n(y)$ solve the dual Kantorovich problem.
Let us recall  that
$\nabla \varphi_n$ ($\nabla \psi_n$) is the  optimal transportation mapping sending  $\mu_n$ to $\nu_n$  ($\nu_n$ to $\mu_n$).
One has
$$
\varphi_n(x) + \psi_n(y) \ge \langle P_n x, P_n y\rangle
$$
for every $x, y$.
The equality is attained on the support of $\mathbb{\pi}_n$. In particular,
$$
\varphi_n(x) + \psi_n(\nabla \varphi_n(x))  =  \langle P_n x,   \nabla \varphi_n(x)\rangle.
$$

It is easy to check that $\{ \pi_n\}$ is a tight sequence.
By the Prokhorov theorem one can extract a weakly convergent subsequence $\pi_{n_k}\to \pi$.
Note that $\pi_n$  {\bf is not} the projection of $\pi$.

The main result if the section is the following theorem.
\begin{theorem}
\label{existence-crit}
Assume that {\bf (A)} is fulfilled and, in addition,
 $$F_n(x,y,0,0)  = \varphi_n(x) + \psi_n(y) - \langle P_n x, P_n y\rangle  \to 0
 $$ in measure with respect to $\pi$. Then
there exists a mapping $T \colon \mathbb{R}^{\infty} \mapsto \mathbb{R}^{\infty}$ such that
$$
T(x)=y
$$
for $\pi$-almost all $(x,y)$.
\end{theorem}

In what follows we will pass several time to  subsequences and use for the  new subsequences the same index
$n$ again,  with the agreement that $n$ takes values in another infinite set $ \mathbb{N}' \subset \mathbb{N}$.
Let us fix  unit vectors $e_i, e_j$ for some $i, j \in \mathbb{N}$ and  consider the following sequence of non-negative functions:
$$
F_n(x,y,t,s) = \varphi_n(x + t e_i)  + \psi_n(y + s e_j) - \langle P_n (x + t e_i), P_n(y + s e_j) \rangle
$$
with $n > i, n >j$.

\begin{lemma}
\label{06.01.13}
There exists a $L^{1+\varepsilon}(\pi)$-weakly convergent subsequence
$$\varphi_{n_k}(x+ t e_i) - \varphi_{n_k}(x) \to U(x).$$
The following relation holds for the  limiting function $U(x)$:
$$
\Bigl| \int  U(x) \ d \mu - t \int \langle y, e_i \rangle \ d \nu \Bigr| \le C t p(t).
$$
\end{lemma}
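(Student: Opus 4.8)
The plan is to produce a uniform $L^{1+\varepsilon}$-bound on the increments $\varphi_n(\cdot+te_i)-\varphi_n$, extract a weakly convergent subsequence by reflexivity, and then evaluate the limit of the averages $\int(\varphi_n(x+te_i)-\varphi_n(x))\,d\mu$ via the second inequality of Lemma~\ref{L2-D2}. Write $g_n(x):=\varphi_n(x+te_i)-\varphi_n(x)$ for $n>i$. First I would observe that $\varphi_n$ depends only on $x_1,\dots,x_n$, so $g_n$ factors through $P_n$ and consequently $\|g_n\|_{L^{1+\varepsilon}(\mu)}=\|g_n\|_{L^{1+\varepsilon}(\mu_n)}=\|g_n\|_{L^{1+\varepsilon}(\pi)}$, the last norm computed by reading $g_n$ as a function of the $x$-variable on $\mathbb{R}^{\infty}\times\mathbb{R}^{\infty}$. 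By Lemma~\ref{proj} the projected data $(\mu_n,\nu_n)$ satisfy Assumption~(A) with constants no worse than for $(\mu,\nu)$, so the first inequality of Lemma~\ref{L2-D2} applied to $(\mu_n,\nu_n)$ gives
$$
\|g_n\|_{L^{1+\varepsilon}(\mu)}^{1+\varepsilon}\le t^{1+\varepsilon}\,\bigl\||\langle y,e_i\rangle|^{1+\varepsilon}\bigr\|_{L^{p}(\nu)}\cdot\bigl(1+p(t)^{1/q}\bigr),
$$
a bound independent of $n$ and finite for small $t$ by Assumption~(A). Hence $\{g_n\}_{n>i}$ is bounded in the reflexive space $L^{1+\varepsilon}(\pi)$.

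Next I would extract a subsequence $g_{n_k}\rightharpoonup U$ weakly in $L^{1+\varepsilon}(\pi)$. Since the functions depending on $x$ alone form a closed (hence, by Mazur's theorem, weakly closed) subspace of $L^{1+\varepsilon}(\pi)$, the limit $U=U(x)$ is again of that form; moreover, since the $x$-marginal of $\pi_{n_k}$ is $\mu_{n_k}$ and $\mu_{n_k}\to\mu$ weakly, the $x$-marginal of $\pi$ is $\mu$, so $\int U\,d\pi=\int U\,d\mu$. Testing the weak convergence against the constant function $\mathbf{1}\in L^{(1+\varepsilon)'}(\pi)$ yields $\int g_{n_k}\,d\mu\to\int U\,d\mu$.

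It remains to estimate $\int g_n\,d\mu$ uniformly in $n$. Applying the second inequality of Lemma~\ref{L2-D2} to $(\mu_n,\nu_n)$ — in the absolute-value form, which its proof actually delivers through the Hölder step — I get
$$
\Bigl|\int\bigl(\varphi_n(x+te_i)-\varphi_n(x)-t\,\partial_{e_i}\varphi_n(x)\bigr)\,d\mu_n\Bigr|\le t\,\|\langle y,e_i\rangle\|_{L^{p}(\nu)}\,p(t)^{1/q}=:C\,t\,p(t),
$$
where the exact exponent on $p(t)$ is immaterial since $p(t)\to 0$. Now $\int\partial_{e_i}\varphi_n\,d\mu_n=\int\langle\nabla\varphi_n,e_i\rangle\,d\mu_n=\int\langle y,e_i\rangle\,d\nu_n=\int y_i\,d\nu$ exactly for $n>i$ (the one-dimensional $i$-th marginal of $\nu_n$ coincides with that of $\nu$), and $\int g_n\,d\mu=\int g_n\,d\mu_n$ because $g_n$ factors through $P_n$. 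Combining these, $\bigl|\int g_n\,d\mu-t\int\langle y,e_i\rangle\,d\nu\bigr|\le C\,t\,p(t)$ for every $n>i$; passing to the limit along the subsequence $n_k$ from the previous step gives the asserted inequality for $U$.

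The only genuinely non-routine point is the first one: one must check that Assumption~(A) transfers to all finite-dimensional projections with uniform constants — this is precisely Lemma~\ref{proj}, via Jensen's inequality together with the identity $e^{\beta^{(n)}_{v}}=\Exp^{n}_{\mu}e^{\beta_{v}}$ — and, secondly, that the increments $g_n$, although a priori living on coordinate spaces of growing dimension, can be regarded as a single bounded sequence in one fixed reflexive Banach space $L^{1+\varepsilon}(\pi)$, so that weak sequential compactness applies and the limit $U$ is well defined. Everything else is bookkeeping with the change-of-variables formula and Hölder's inequality already carried out in Lemma~\ref{L2-D2}.
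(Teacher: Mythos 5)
Your proposal is correct and follows essentially the same route as the paper: a uniform $L^{1+\varepsilon}$ bound on the increments via the first inequality of Lemma~\ref{L2-D2} together with Lemma~\ref{proj}, weak sequential compactness, and the identity $\int\bigl[\varphi_n(x+te_i)-\varphi_n(x)\bigr]\,d\mu - t\int\langle y,e_i\rangle\,d\nu=\int\bigl[\varphi_n(x+te_i)-\varphi_n(x)-t\,\partial_{e_i}\varphi_n\bigr]\,d\mu$ controlled by $Ctp(t)$ through the second inequality of Lemma~\ref{L2-D2}. The only cosmetic difference is that the paper packages this computation through the auxiliary function $F_n(x,y,t,0)$ (reused in Lemma~\ref{06.01.13(2)}), while you work directly with the increments; your remarks on the absolute-value form of the second inequality and on passing from $L^{1+\varepsilon}(\mu)$ to $L^{1+\varepsilon}(\pi)$ via the $x$-marginal are accurate and, if anything, slightly more careful than the paper's own wording.
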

\begin{proof}
Taking into account that $\int F_n(x,y,0,0) \ d \pi_n =0$, one obtains
$$
\int F_n(x,y,t,0) \ d \pi_{n} = \int F_n(x,y,t,0) \ d \pi_n - \int F_n(x,y,0,0) \ d \pi_n \ge 0.
$$
Note  that the right-hand side equals
$$
\int ( F_n(x,y,t,0) - F_n(x,y,0,0)) \ d \pi_{n} =
\int  \bigl[ \varphi_n(x+t e_i) - \varphi_n(x)  - t \langle y, e_i \rangle \bigr]   \ d \pi_n.
$$
Taking into account that the projection of $\pi_n$  onto $X$ coincides with $\mu_n$ and $\varphi_n$ depends on the  first $n$ coordinates, one finally obtains that for $n > i$ the latter is equal to
$$
\int  \bigl[ \varphi_n(x+t e_i) - \varphi_n(x) \bigr] \ d \mu    - t \int  \langle y, e_i \rangle \ d \nu = \int  \bigl[ \varphi_n(x+t e_i) - \varphi_n(x)  - t \partial_{e_i} \varphi_n(x) \bigr] \ d \mu.
$$
It follows from  Lemma \ref{L2-D2}, Lemma \ref{proj} and Assumption (A) that
\begin{equation}
\label{04.10}
\Bigl| \int F_n(x,y,t,0) \ d \pi_{n}  \Bigr| \le C t p(t).
\end{equation}

Since $\varphi_n$ depends on a finite number of coordinates ($\le n$), one has $$\int |\varphi_n(x+t e_i) - \varphi_n(x) |^{1+\varepsilon} \ d \mu = \int |\varphi_n(x+t e_i) - \varphi_n(x) |^{1+\varepsilon} \ d \mu_n.$$
Hence by Lemma \ref{L2-D2}
$$
U_n(x) = \varphi_n(x+t e_i) - \varphi_n(x) \in L^{1+\varepsilon}(\mu)
$$
and, moreover, $\sup_n  \| U_n\|_{L^{1+\varepsilon}(\mu)}  < \infty$. Thus there exists function $U \in L^{1+\varepsilon}(\mu)$ such that
for some subsequence $n_k$
$$
\varphi_{n_k}(x+t e_i) - \varphi_{n_k}(x) \to U(x)
$$
weakly in $L^{1+\varepsilon}(\mu)$.
Passing to the limit we obtain from (\ref{04.10}) that
$$
\Bigl| \int  U(x) \ d \mu - t \int \langle y, e_i \rangle \ d \nu \Bigr| \le  C t p(t).
$$
\end{proof}

\begin{lemma}
\label{06.01.13(2)}
Assume that $F_n(x,y,0,0) \to 0$ in measure with respect to $\pi$. Then
$$
U(x) - t\langle y, e_i \rangle \ge 0
$$
for $\pi$-almost all $(x,y)$.
\end{lemma}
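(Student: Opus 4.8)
The plan is to derive the pointwise inequality from the one-sided bound $F_n(x,y,t,0)\ge 0$, which holds for all $(x,y)$ by the Kantorovich constraint $\varphi_n(x)+\psi_n(y)\ge\langle P_nx,P_ny\rangle$, combined with the hypothesis that $F_n(x,y,0,0)\to 0$ in $\pi$-measure and the weak $L^{1+\varepsilon}(\mu)$ convergence $\varphi_{n_k}(x+te_i)-\varphi_{n_k}(x)\to U(x)$ furnished by Lemma~\ref{06.01.13}.

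First I would record, for $n>i$, the identity
$$
F_n(x,y,t,0)-F_n(x,y,0,0)=\bigl(\varphi_n(x+te_i)-\varphi_n(x)\bigr)-t\langle y,e_i\rangle=U_n(x)-t\langle y,e_i\rangle ,
$$
so that, writing $h_n(x,y):=U_n(x)-t\langle y,e_i\rangle$ and using $F_n(x,y,t,0)\ge 0$,
$$
h_n\ge -F_n(\cdot,\cdot,0,0),\qquad F_n(\cdot,\cdot,0,0)\ge 0 .
$$
Along the subsequence $n_k$ of Lemma~\ref{06.01.13} the functions $h_{n_k}$ converge weakly in $L^{1+\varepsilon}(\pi)$ to $h(x,y):=U(x)-t\langle y,e_i\rangle$: indeed $U_{n_k}\to U$ weakly in $L^{1+\varepsilon}(\mu)$, while $U_n$ depends only on $x$ and the $x$-marginal of $\pi$ is $\mu$ (since $\pi_n\to\pi$ weakly and $P_n\to\mathrm{id}$ pointwise on $\mathbb{R}^{\infty}$); moreover $\langle y,e_i\rangle\in L^{1+\varepsilon}(\nu)$ by Assumption~(A) and the $y$-marginal of $\pi$ is $\nu$. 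The same facts give $\sup_k\|h_{n_k}\|_{L^{1+\varepsilon}(\pi)}<\infty$.

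The key step is to control the negative parts $h_{n_k}^{-}$. From the displayed bound, $h_{n_k}^{-}\le F_{n_k}(\cdot,\cdot,0,0)$, which tends to $0$ in $\pi$-measure by hypothesis; since $\{h_{n_k}^{-}\}$ is bounded in $L^{1+\varepsilon}(\pi)$ with $\varepsilon>0$, it is uniformly integrable, and Vitali's theorem yields $h_{n_k}^{-}\to 0$ in $L^{1}(\pi)$. Hence, for every Borel set $A$,
$$
\int_A h_{n_k}\,d\pi\ge -\int h_{n_k}^{-}\,d\pi\longrightarrow 0 ,
$$
while weak convergence tested against the bounded function $\mathbf 1_A$ gives $\int_A h_{n_k}\,d\pi\to\int_A h\,d\pi$; therefore $\int_A h\,d\pi\ge 0$. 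Applying this to $A=\{h<0\}$ forces $\pi(h<0)=0$, i.e. $U(x)-t\langle y,e_i\rangle\ge 0$ for $\pi$-almost all $(x,y)$, as claimed.

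The one genuinely delicate point is this last passage: weak convergence alone does not respect sign constraints, so one must use both the in-measure hypothesis (to drive the negative parts toward $0$) and the uniform $L^{1+\varepsilon}$ bound with $\varepsilon>0$ (to upgrade convergence in measure of the negative parts to convergence in $L^1$). The remaining items—identifying the marginals of $\pi$, the integrability of $\langle y,e_i\rangle$, and the algebraic identity above—are routine.
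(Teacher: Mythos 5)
Your proof is correct, and it rests on exactly the same key inequality as the paper's: the Kantorovich constraint gives $U_n(x)-t\langle y,e_i\rangle = F_n(x,y,t,0)-F_n(x,y,0,0)\ge -F_n(x,y,0,0)$, and one then combines the weak $L^{1+\varepsilon}(\pi)$ convergence of $U_n(x)-t\langle y,e_i\rangle$ with the convergence in measure of $F_n(\cdot,\cdot,0,0)$ to $0$. Where you diverge is in the final limiting step. The paper passes to a subsequence along which $F_n\to 0$ $\pi$-a.e.\ and then invokes a Banach--Saks/Mazur-type argument: Ces\`aro means of the weakly convergent sequence $f_n$ converge to $f$ $\pi$-a.e.\ along a further subsequence, and averaging the pointwise bound $f_n\ge -F_n$ yields $f\ge 0$ a.e. You instead observe that the negative parts satisfy $h_{n_k}^-\le F_{n_k}(\cdot,\cdot,0,0)$, hence converge to $0$ in measure, and that their uniform $L^{1+\varepsilon}(\pi)$ bound (inherited from $\sup_k\|U_{n_k}\|_{L^{1+\varepsilon}(\mu)}<\infty$ and the integrability of $\langle y,e_i\rangle$ under $\nu$) upgrades this to $L^1(\pi)$ convergence via Vitali; testing the weak convergence against indicators $\mathbf 1_A$ then forces $\int_A h\,d\pi\ge 0$ for every Borel $A$, hence $h\ge 0$ a.e. Both devices are standard ways to make a sign constraint survive weak convergence; yours avoids the Ces\`aro-mean machinery at the cost of the uniform-integrability step, and is if anything slightly more self-contained. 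Your identification of the marginals of $\pi$ (needed to transfer the $L^{1+\varepsilon}$ bounds from $\mu$ and $\nu$ to $\pi$) is the same implicit step the paper uses, and is justified since the marginals $\mu_n,\nu_n$ of $\pi_n$ converge weakly to $\mu,\nu$.
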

\begin{proof}
Note that
$$
\bigl[ \varphi_n(x+t e_i) - \varphi_n(x) - t\langle y, e_i \rangle  \bigr] + F_n(x,y,0,0) =  \varphi_n(x+t e_i)  + \psi_n(y) - \langle P_n y, P_n(x + t e_i) \rangle
$$
is a non-negative function for every $n$.
Since $F_n(x,y,0,0) \to 0$ in measure, there exists a subsequence (denoted again by $F_n$)
which converges to zero  $\pi$-almost everywhere.
Since  $f_n = \varphi_n(x+t e_i) - \varphi_n(x) - t\langle y, e_i \rangle$ converges to $ f = U(x) - t\langle y, e_i \rangle$ weakly in $L^{1+\varepsilon}(\pi)$, one can assume (passing again to a subsequence)
that $\frac{1}{N} \sum_{n=1}^{N} f_n \to f $ $\pi$-a.e. Since $f_n + F_n \ge 0$, this implies that $f \ge 0$ $\pi$-a.e.
\end{proof}

\begin{proposition}
\label{monotone-exist}
Assume that there exists a sequence of continuous functions $$f_n(x_1, \cdots, x_n), g_n(y_1, \cdots, y_n) \in L^1(\pi_n)$$ such that $G_n = f_n(x) + g_n(y) - \sum_{i=1}^n x_i y_i$ has the following properties:
\begin{itemize}
\item[1)]
$G_n \ge 0$,
\item[2)]
$
G_n \le G_m, \ \ \forall  \  n \le m, x,y \in \mathbb{R}^m,
$
\item[3)]
$
\sup_n \int
G_n \ d \pi_n < \infty.
$
\end{itemize}
Then $F_n(x,y,0,0) \to 0$ in $L^1(\pi)$.
\end{proposition}
\begin{proof}
We start with the identity $\int F_n(x,y, 0, 0) \ d \pi_n =0$ and rewrite it in the following way:
\begin{equation}
\label{05.01.13}
0 = \int ( \varphi_n - f_n) \ d \mu +  \int( \psi_n - g_n) \ d \nu
+ \int \bigl( f_n(x) + g_n(y) - \sum_{i=1}^n x_i y_i \bigr) \ d \pi_n.
\end{equation}
Since $\varphi_n, \psi_n$ are defined up to a constant, one can assume that $\int( \psi_n - g_n) \ d \nu=0$.
Thus $
- \int ( \varphi_n - f_n) \ d \mu = \int \bigl( f_n(x) + g_n(y) - \sum_{i=1}^n x_i y_i \bigr) \ d \pi_n.
$
It follows from 1) and 3) that the right-hand side is a bounded sequence of non-negative numbers.
Passing to a subsequence we may assume that  the right-hand side has a limit.
It follows from the weak convergence $\pi_n \to \pi$ and the monotonicity property 2) that for every $k$
\begin{align*}
\underline{\lim}_n  & \int \bigl( f_n(x) + g_n(y) - \sum_{i=1}^n x_i y_i \bigr) \ d \pi_n
\ge
\underline{\lim}_n  \int \bigl( f_k(x) + g_k(y) - \sum_{i=1}^k x_i y_i \bigr) \ d \pi_n
\\&
= \int \bigl( f_k(x) + g_k(y) - \sum_{i=1}^k x_i y_i \bigr) \ d \pi.
\end{align*}
Hence
\begin{align*}
\underline{\lim}_n &  \int \bigl( f_n(x) + g_n(y) - \sum_{i=1}^n x_i y_i \bigr) \ d \pi_n
\ge
\lim_k \int \bigl( f_k(x) + g_k(y) - \sum_{i=1}^k x_i y_i \bigr) \ d \pi,
\end{align*}
 where the limit in the right-hand side exists, because the sequence is  monotone.
Hence we get from (\ref{05.01.13})
$$
0 \ge \lim_n  \int ( \varphi_n - f_n) \ d \mu  + \lim_n \int   \bigl( f_n(x) + g_n(y) - \sum_{i=1}^n x_i y_i \bigr) d \pi.
$$
Taking into account that $\int g_n \ d \pi = \int g_n \ d \nu = \int \psi_n \ d \nu = \int \psi_n \ d \pi $, we obtain
\begin{align*}
0 & \ge \lim_n  \int ( \varphi_n - f_n)(x) \ d \mu  + \lim_n \int   \bigl( f_n(x) + g_n(y) - \sum_{i=1}^n x_i y_i \bigr) d \pi	
\\& = \lim_n \Bigl( \int ( \varphi_n(x) + \psi_n(y) - \sum_{i=1}^n x_i y_i)  \   d \pi \Bigr) \ge 0.
\end{align*}
The proof is complete.
\end{proof}

Finally,  we obtain a sufficient condition for the
 existence of an optimal mapping in the infinite-dimensional case.

\begin{proof} {(\bf Theorem \ref{existence-crit})}
Let us fix $e_i$ and choose a sequence of numbers $t_n \to 0$.
We get from Lemma \ref{06.01.13} and Lemma \ref{06.01.13(2)} that there exist  $\pi$-a.e. nonnegative functions $U_{t_n}(x) - t_n \langle y, e_i \rangle $ with
$\int \bigl( U_{t_n}(x) - t_n\langle y, e_i \rangle \bigr) \ d \pi =o(t_n)$. Hence, $\lim_{t_n \to 0} \int \bigl( \frac{U_{t_n}(x)}{t_n} - \langle y, e_i \rangle \bigr) \ d \pi =0$.
Taking into account that $\frac{U_{t_n}(x)}{t_n} - \langle y, e_i \rangle \ge 0$ for $\pi$-almost all $(x,y)$, we conclude that $\frac{U_{t_n}(x)}{t_n}$ converges $\mu$-a.e. and in $L^1(\mu)$ to a function
$u_i(x)$ satisfying $u_i(x) - \langle y, e_i \rangle \ge 0$, $\pi$-a.e. and $\int (u_i(x) - \langle y, e_i \rangle) \ d \pi =0$.  Clearly, $u(x) = \langle y, e_i \rangle$ for $\pi$-almost all $(x,y)$. Repeating these arguments
for every $i \in \mathbb{N}$, we get the claim.
\end{proof}

\section{Application: quasi-product case}

The main result of this section is a generalization of the optimal transport existence theorem for Gaussian measures.
Recall that by results from \cite{FU1}, \cite{Kol04} that for the 
standard Gaussian measure $\gamma = \prod_{i=1}^{\infty} \gamma_i(dx_i)$, $\gamma_i \sim \mathcal{N}(0,1)$
the existence of the optimal transportation mapping  pushing forward $f \cdot \gamma$ into $g \cdot \gamma$ is established, for instance,
under assumption $\int f \log f \ d \gamma < \infty, \int g \log g \ d \gamma < \infty$.
We give in this section a generalization of this result for a wide class of quasi-product measures.

Let  us consider  two product reference  measures   $$P = \prod_{i=1}^{\infty} p_i(x_i) \ dx_i , \ \ Q = \prod_{i=1}^{\infty} q_i(x_i) dx_i$$
and fix the diagonal infinite transportation mapping 
$$
T(x) = (T_1(x_1), \cdots, T_n(x_n), \cdots)
$$
 where $T_i(x_i)$ pushes forward $p_i(x_i)  dx_i$ onto $q_i(x_i)  dx_i$.
Clearly, $T$ takes $P$ onto $Q$.
The inverse mapping $S=T^{-1}$ has the same diagonal structure: $$
S(x) = (S_1(x_1), \cdots, S_n(x_n), \cdots).
$$

\begin{theorem}
Let  $\mu = f \cdot P$ and $\nu = g \cdot Q$ be probability measures
 satisfying the Assumption (A) of the previous section. Assume, in addition, that
\begin{itemize}
\item[1)]
there exists $K>0$ such that
every $q_i$ is $K$-uniformly log-concave;
\item[2)]
there exists $M>0$ such that
$$
 S'_i(x_i) \le M;
$$
for all $i, x_i$;
\item[3)]
Assume that either a) or b) holds for some constants $C>c>0$
\begin{itemize}
\item[a)] $g \log^2 g \in L^1(Q), \ \frac{1}{f} \in L^1(P) , \ f \le C$,
\item[ b)] $
f \log f \in L^1(P), \ \ c \le g \le C$.
\end{itemize}
\end{itemize}
Then there exists a transportation mapping $T$ pushing forward $\mu$ onto $\nu$ which is $\mu$-a.e. limit
of finite-dimensional optimal transportation mappings $T_n$.
\end{theorem}
\begin{remark}
It follows from  Caffarelli's contraction theorem (see Section 2)
that assumption 2) is satisfied if $(-\log p_i(x_i))'' \ge C_0$, $(-\log q_i(x_i))'' \le C_1$  for some $C_0, C_1 >0$ and every $i$. Of course, there exist
 many other examples when this assumption is satisfied.
\end{remark}
\begin{proof}
Consider the finite-dimensional projections $\mu_ n = f_n \cdot P_n$, $\nu_n = g_n \cdot Q_n$, where $P_n = \prod_{i=1}^{n} p_i(x_i) \ dx_i$, $Q_n = \prod_{i=1}^{n} q_i(x_i) \ dx_i$.
Here $f_n$ and $g_n$ are the conditional expectations of $f, g$ with respect to $P, Q$
and the $\sigma$-algebra $\mathcal{F}_n$, generated by the first $n$ coordinates.
Recall that $\nabla \varphi_n$ is the optimal transportation of $\mu_n$ to $\nu_n$.
Let $$u_i(x_i),  \ v_i(y_i) = u_i^*$$ be the  one-dimensional convex
potentials associated to the mappings $T_i, S_i$, respectively: $$T_i=u'_i, \ S_i=v'_i.$$ Note that $\tilde{T}_n  = (T_1, \cdots, T_n)$ pushes forward
 $ P_n$ onto $Q_n$
and
$\nabla \varphi_n$ pushes forward
$\frac{f_n}{g_n(\nabla \varphi_n)} \cdot P_n$ onto $Q_n$.

According to Proposition \ref{generalTalagrand} one has the following estimate:
\begin{equation}
\label{03.03.13(1)}
\frac{K}{2} \int |\tilde{T}_n - \nabla \varphi_n| ^2  d P_n
\le \int \log \Bigl( \frac{g_n(\nabla \varphi_n)}{f_n} \Bigr)   d P_n.
\end{equation}
To see that the right-hand side is finite, let us estimate
\begin{align*}
\int \log \Bigl( \frac{g_n(\nabla \varphi_n)}{f_n} \Bigr)   d P_n
\le & \int \log \frac{1}{f_n}   d P_n
+
\frac{1}{2}\int \log^2 g_n(\nabla \varphi_n) f_n d P_n + \frac{1}{2}  \int \frac{dP_n}{f_n}
\\&
= \int \log \frac{1}{f_n}   d P_n
+
\frac{1}{2}  \int g_n  \log^2 g_n  d Q_n + \frac{1}{2}  \int \frac{dP_n}{f_n}.
\end{align*}
Applying  Assumption 3a of the Theorem and  the Jensen inequality one can easily get that
the right-hand side is uniformly bounded.

We complete the proof by applying   Theorem \ref{existence-crit} and Proposition \ref{monotone-exist}.
For application of Proposition \ref{monotone-exist}
set
$$ f_n = \sum_{i=1}^n u_i(x_i),  \ g_n =\sum_{i=1}^n v_i(y_i).$$
We need to  estimate
$
\sum_{i=1}^n \int (u_i(x_i) + v_i(y_i) -  x_i y_i) \ d\pi_n.
$
Taking into account that $\pi_n$ is supported on the graph of $\nabla \varphi_n$, and the relation $u_i(x_i) + v_i(T_i(x)) = x_i T_i(x)$ we obtain that the latter equals to
\begin{align*}
\int & \bigl( u_i(x_i) + v_i(\partial_{x_i}\varphi_n) -  x_i \partial_{x_i} \varphi_n(x) \bigr) \ d \mu_n
\\&
=
\int  \Bigl[ v_i(\partial_{x_i} \varphi_n(x)) - v_i(T_i(x)) -  x_i (\partial_{x_i} \varphi_n(x) - T_i(x)) \Bigr] \ d \mu_n
\\& =
\int  \Bigl[ v_i(\partial_{x_i} \varphi_n(x)) - v_i(T_i(x)) -  v'_i(T_i(x)) (\partial_{x_i} \varphi_n(x) - T_i(x)) \Bigr] \ d \mu_n
\\&
\le M \int ( \partial_{x_i} \varphi_n(x) - T_i)^2 \ d \mu_n.
\end{align*}
Here we use the  uniform bound $v''_i = S'_i \le M$.
Finally, using the uniform bound $f \le C$ and the Jensen inequality we obtain that
\begin{align*}
\sum_{i=1}^n & \int (u_i(x_i) + v_i(y_i) -  x_i y_i) \ d\pi_n \le M C \int |\nabla \varphi_n - \tilde{T}_n|^2 \ d P_n.
\end{align*}
We have already shown that the right-hand side is bounded.
The result now follows from  Proposition \ref{monotone-exist}.

The proof follows the same line under Assumption 3b, but we use 
 another corollary of Proposition \ref{generalTalagrand}:
$$
\frac{K}{2} \int |\tilde{T}_n - \nabla \varphi_n| ^2 \frac{f_n}{g_n(\nabla \varphi_n)}  d P_n
\le \int \log \Bigl( \frac{f_n}{g_n(\nabla \varphi_n)} \Bigr) \frac{f_n}{g_n(\nabla \varphi_n)}  d P_n.
$$
The detailes are left to the reader.
\end{proof}

\section{Symmetric transportation problem and ergodic decomposition of optimal transportation plans}

\subsection{Symmetric transportation problem}

In this section we discuss the mass transportation of 
symmetric (mainly exchangeable) measures, where the word ''symmetric'' means ''invariant under action of a group $\Gamma$''.

Recall that a probability measure is exchangeable
 if it is  invariant with respect to any permutation of finite number of coordinates.
 Before we consider $\mathbb{R}^{\infty}$, let us make some remarks
 on the finite-dimensional case.

Consider  the group  $S_{d}$ of all permutations of $\{1, \cdots , d\}$ acting on $\mathbb{R}^{d}$ as follows:
$$
L_{\sigma}(x) = (x_{\sigma(1)}, x_{\sigma(2)}, \cdots, x_{\sigma(d)}), \ \ \ \sigma \in S_{d}.
$$
Let $\Gamma \subset S_d$ be any subgroup with the property that
for every couple $i, j$ there exists $\sigma \in \Gamma$ such that $\sigma(i)=j$.

Assume that the source and target measures are both invariant with respect to $\Gamma$. 
Under additional assumption that the cost function $c$ is $\Gamma$-invariant (for instance, $c=|x-y|^2$) one can easily check that the Kantorivich potential $\varphi$ is $\Gamma$-invariant as well:
$\varphi = \varphi \circ L_{\sigma}$ for any $\sigma \in \Gamma$ {see \cite{Moameni}, \cite{Zaev1}}. Consequently, the optimal transportation $T = \nabla \varphi$ has the following commutation property:
$$
T = L^{*}_{\sigma} (  T \circ L_{\sigma})
=
 L^{-1}_{\sigma} \circ T \circ L_{\sigma}.
$$
Equivalently,
$$
  L_{\sigma} \circ T = T \circ L_{\sigma} .
$$
The optimal transportation  plan $\pi(dx, dy)$ is also $\Gamma$-invariant
under the following extension of the action of $\Gamma$ to $\mathbb{R}^d \times \mathbb{R}^d$: 
$$L_{\sigma}(x,y) =(L_{\sigma}x, L_{\sigma} y). 
$$

Now let $\sigma(i)=j$. One has
\begin{align*}
\int x_i y_i \ d \pi & =
 \int \langle e_i,  x\rangle  \langle e_i, y \rangle \ d \pi  =
\int \langle L_{\sigma} e_i,  L_{\sigma} x \rangle \langle L_{\sigma} e_i,  L_{\sigma} y \rangle\ d \pi
\\&   = \int \langle e_j,  L_{\sigma} x \rangle \langle e_j, L_{\sigma} y \rangle \ d \pi
=
  \int x_j y_j \ d \pi.
\end{align*}
Consequently,
\begin{equation}
\label{W22dint}
W^2_2(\mu,\nu) = \int \|x-y\|^2 \ d \pi =  \sum_{i=1}^d \int (x_i - y_i)^2 \ d \pi = d \int (x_i - y_i)^2 \ d \pi, \ \ \forall i.
\end{equation}

\begin{lemma} 
\label{equiv}
The standard quadratic Kantorovich
problem on $\mathbb{R}^d$ with $\Gamma$-invariant marginals is equivalent to the transportation problem for the cost $|x_1-y_1|^2$ 
with additional constraint that the solution is a  $\Gamma$-invariant probability  measure
\end{lemma}
\begin{proof}
Let $\pi$ be the solution to the quadratic Kantorovich problem for the marginals $\mu, \nu$ 
and $\tilde{\pi}$ be a measure giving the minimum to the functional
$m \mapsto \int |x_1-y_1|^2 d m$ among of the $\Gamma$-invariant measures with the same marginals.
By optimality of $\pi$
$$
\int |x-y|^2 d \pi \le \int |x-y|^2 d \tilde{\pi}.
$$
Since $\pi$ and $\tilde{\pi}$ are both $\Gamma$-invariant, (\ref{W22dint}) implies that $\int |x_1-y_1|^2 d \pi \le \int |x_1-y_1|^2 d \tilde{\pi}.$  By optimality of $\tilde{\pi}$ one gets $\int |x_1-y_1|^2 d \pi = \int |x_1-y_1|^2 d \tilde{\pi}$, and, finally
$\int |x-y|^2 d \pi = \int |x-y|^2 d \tilde{\pi}$. This means that $\tilde{\pi}$ solves the quadratic Kantorovich problem as well and, vice versa,
$\pi$ solves the Kantorovich problem with symmetric constraints.
\end{proof}

The conclusion made above helps us to give a variational meaning to the transportation problem in the infinite-dimensional case.
\begin{definition}
\label{symMK-def} {\bf Symmetric Kantorovich problem.}
Let $\Gamma$ be a group of linear operators acting on $\mathbb{R}^{\infty}$ and $\mu,\nu$ be
$\Gamma$-invariant probability measures.
Assume in addition that
\begin{itemize}
\item
For every $i, j \in \Nat$ there exists $g \in \Gamma$ such that
$$
g(e_i)=e_j.
 $$ 
 \item
 The space of probability measures $\Pi^{\Gamma}(\mu,\nu)$ on $\mathbb{R}^{\infty} \times \mathbb{R}^{\infty}$
 which are invariant with respect to the action
 $(x,y) \mapsto (g(x),g(y))$, $g \in \Gamma$ of  $\Gamma$
 and have marginals $\mu,\nu$, is non-empty and closed in the weak topology.
\end{itemize}
We say that a measure $\pi \in  \Pi^{\Gamma}(\mu,\nu)$  is a solution
 to the  $\Gamma$-symmetric (quadratic)  Kantorovich problem
if it gives the minimum to the functional
\begin{equation}
\label{symMK}
\Pi^{\Gamma}(\mu,\nu)\ni m \mapsto \int (x_1-y_1)^2 \ d m.
\end{equation}
\end{definition}
\begin{definition} {\bf Symmetric optimal transportation.}
Let $m$ be a solution to the symmetric Kantorovich problem.
A measurable mapping $T \colon \mathbb{R}^{\infty} \mapsto \mathbb{R}^{\infty}$ is called  optimal transportation mapping of $\mu$ onto $\nu$ if
$$m(\{(x,T(x))\})=1.$$
\end{definition}

The standard compactness arguments imply that a solution to the Kantorovich problem (\ref{symMK}) exists provided $\int x^2_1 \ d \mu < \infty, \int y^2_1 \ d \nu < \infty$.
If, in addition, there exists an optimal transportation mapping $T$, it commutes with any $g \in \Gamma$. This means that for $\mu$-almost all $x$ and every $g \in \Gamma$ 
\begin{equation}
\label{commute}
T \circ g (x) = g \circ T(x).
\end{equation}

\begin{example} {\bf Exchangeable measures.}
\label{ExchM}
We denote by  $S_{\infty}$ the group of permutation of $\mathbb{N}$ which change only a finite number of coordinates. We consider its natural action on $\mathbb{R}^{\infty}$ defined by
$$
\sigma(x) = (x_{\sigma(i)}), \  \ x = (x_i) \in \mathbb{R}^{\infty}, \ \ \sigma \in S_{\infty}.
$$
Consider measures $\mu$ and $\nu$ which are invariant with respect to any   $\sigma \in S_{\infty}$:
$$
\mu = \mu \circ \sigma^{-1}, \ \
\nu = \nu \circ \sigma^{-1}.
$$
The measures of this type are called exchangeable. The basic example is given by  the countable power $m^{\infty}$
of some Borel measure $m$ on $\mathbb{R}$.
The structure of mappings satisfying (\ref{commute}) in the case $\mu = m^{\infty}$ is very easy to describe. Consider the function $T_1(x) = \langle T(x), e_1 \rangle$ and fix the first coordinate $x_1$. Then the function $F \colon (x_2, x_3, \cdots) \to T_1(x)$
is invariant with respect to $S_{\infty}$ (acting on $(x_2,x_3, \cdots)$). Hence $F$ is constant
according by the Hewitt--Sawage $0-1$ law applied to the measure $\mu$. Thus $T_1(x) = T_1(x_1)$ depends on
$x_1$ only (up to a set of measure zero). The same arguments applied to other coordinates imply that  $T$ is diagonal: $(T_1(x_1), T_2(x_2), \cdots)$. Moreover, $T_i(x)=T_1(x)$ because $T$ commutes with every permutation of coordinates.
\end{example}

\begin{example}
\label{noexist-average} {\bf Optimal transportation not always exists.}
Let $\mu_1, \mu_2$ be countable powers of two different one-dimensional measures.
By the Kakutani dichotomy theorem they are mutually singular.
There is no any mass transportation $T$ of $\mu = \mu_1$ onto $\nu=\frac{1}{2} (\mu_1 + \mu_2)$ satisfying (\ref{commute}).
Indeed, according to Example \ref{ExchM} any $T$ satisfying (\ref{commute}) must be diagonal, hence the measure $\mu \circ T^{-1}$ must be a product measure.
\end{example}

Thus, we see that the optimal transportation does  not always exist.
This example can be easily generalized to many other linear groups $\Gamma$
and $\Gamma$-invariant measures.  It can be easily understood that $T$ does not exists
provided the source measure is ergodic, but the target measure is not.

\subsection{Ergodic decomposition of optimal transportation plans}

The connection between Kantorovich problem and ergodic decomposition has been established 
under fairy general assumptions by the second-named author in \cite{Zaev2}. 
A particular case of this result is given in the following theorem.

Let $\Gamma$ be an amenable group acting  by continuous one-to-one mappings on a Polish space $X$. Let $\Pi^{\Gamma}$ be the set of all Borel probability
$\Gamma$-invariant measures  and $\mu, \nu \in \Pi^{\Gamma}$. The set of $\Gamma$-invariant
transportation plans with marginals $\mu,\nu$ will be denoted by $\Pi^{\Gamma}(\mu,\nu)$.  Assume that the cost function $c$ is  lower semicontinuous
and $\Pi^{\Gamma}(\mu,\nu)$ is non-empty and closed in the weak topology.

Let us fix a solution $\pi$ to the $\Gamma$-invariant Kantorovich problem with marginals $\mu,\nu$.
Denote by $\Delta(X)$ the set all $\Gamma$-invariant ergodic measures on $X$.
Assume we are given ergodic decompositions
\begin{equation}
\label{munudecomp}
\mu  = \int_{\Delta(X)}  \mu^x \ d {\sigma_{\mu}}, \  \nu  = \int_{\Delta(Y)}  \nu^y \ d {\sigma_{\nu}}
\end{equation}
of $\mu,\nu$, where $X=Y$, $\sigma_{\mu}, \sigma_{\nu}$ are probability measures on $\Delta(X), \Delta(Y)$
and, similarly, the ergodic decomposition  of $\pi$:
\begin{equation}
\label{pidecomp}
\pi = \int_{\Delta(X\times Y) } \pi^{x,y} d \delta
\end{equation}
(recall that the $\Gamma$-invariance for $\pi$ means the invariance with respect to the action $(x,y) \mapsto (g(x), g(y))$).
It is straightforward that $\delta$-almost all $\pi^{x,y}$ have ergodic marginals
and taking the projections of the both sides of (\ref{pidecomp}) we obtain decompositions 
(\ref{munudecomp}). Moreover, the following statement holds:
\begin{theorem}
\label{plandecom}
For $\delta$-almost all $(x,y)$ measure $\pi^{x,y}$ solves  the $\Gamma$-symmetric Kantorovich problem with  marginals
$\mu^x, \nu^y$:
$$
K^{\Gamma}_c(\mu^x,\nu^y) = \inf_{m \in \Pi^{\Gamma}(\mu^x,\nu^y)} \int c d m = \int c d \pi^{x,y}
$$
and the following representation formula holds:
$$
\inf_{\pi \in \Pi^{\Gamma}(\mu,\nu)} \int c d \pi = \inf_{\delta \in \Pi(\sigma_{\mu}, \sigma_{\nu})}
\int  
K^{\Gamma}_c(\mu^{x}, \nu^{y} ) \ d \delta. 
$$
\end{theorem}
\begin{remark}
In the situation of Theorem \ref{plandecom} one can decompose the optimal transportation plan for ergodic marginals $\mu,\nu$:
$\pi = \int_{\Delta(X\times Y) } \pi^{x,y} d \delta$. Ergodicity of the marginals implies immediately that $\delta$-almost all
$\pi^{x,y}$ have the same marginals $\mu$ and $\nu$. The optimality of $\pi^{x,y}$ for the cost $c$ follows from  Theorem \ref{plandecom}.
Thus we get that any solvable symmetric Kantorovich problem with ergodic marginals admits, in particular, an ergodic solution.
\end{remark}

Thus the symmetric transportation problem can be rediced to the following steps:
\begin{itemize}
\item[Q1)]
Construct a solution to the symmetric Kantorovich problem for ergodic measures.
\item[Q2)]
Given two non-ergodic measures $\mu,\nu$ and the corresponding ergodic decompositions
(\ref{munudecomp}) construct a solution to the  Kantorovich problem to measures $\sigma_{\mu}, \sigma_{\nu}$
on $\Delta(X)$ with the cost function $K^{\Gamma}_c$.
\end{itemize}

Consider application of Theorem \ref{plandecom} to several
classical groups.

\begin{example}
{\bf Exchangeable measures revisited.} Consider invariant transportation problem for exchangeable measures and $c=(x_1-y_1)^2$. The answer to Q1) is trivial, because ergodic measures are countable powers and
the structure of the corresponding solution is trivial. As for Q2), by the de Finetti theorem 
the space of ergodic measures is isomorphic to
the space  $\mathcal{P}(\mathbb{R})$ of probability measures on $\mathbb{R}$. 
Thus to resolve an optimal transportation problem for exchangeable measures, we need
to study the optimal transportation problem
for a couple of measures $\mu_0, \nu_0$
on $\mathcal{P}(\mathbb{R})$ arising from the de Finetti decomposition. It is clear that the  cost function $c$ on $\mathcal{P}(\mathbb{R})$
satisfies
$$
c(p_1, p_2) = W^2_2(p_1,p_2),
$$
where $W_2$ is the standard Kantorovich distance on $\mathbb{R}$.
\end{example}

\begin{example}
\label{Rim}
{\bf Rotationally invariant measures.}
Consider invariant transportation problem for measures invariant with respect to operators of the type $U \times Id$, where $U$ is a rotation of $\mathbb{R}^n = Pr_n (\mathbb{R}^{\infty})$ and $Id$ is the identical operator
on the orthogonal complement to $\mathbb{R}^n$ As usual $c=(x_1-y_1)^2$.
This is an example where the optimal transportation problem admits a precise solution.
By a well known result (see \cite{Kallenberg})
every rotationally invariant measure $\mu$ on $\mathbb{R}^{\infty}$
admits a representation
$$
\mu  = \int \gamma_t d p_{\mu}(t),
$$
where $\gamma_t$ is the distribution of the Gaussian i.i.d. with zero mean and variance $t$ and
$p_{\mu}$ is a measure on $\mathbb{R}_{+}$.
The optimal transportation problem is reduced obviously to the one-dimensional optimal 
transportation between $p_{\mu}$ and $p_{\nu}$.
\end{example}

\begin{example}
{\bf Stationary measures.} These are the measures which are invatiant with respect to the shift:
$$
T \colon x = (x_1, x_2, \cdots) \mapsto (x_2, x_3, \cdots).
$$
Note that the powers of $T$ generates the semigroup $\{0\} \cup \Nat$, but not the group.
However, it makes no difference for our analysis, we are still able to consider the corresponding ergodic decompositions.
In this case the description of ergodic measures is nontrivial and we do not know any general sufficient conditions for existence
even in the case when both measures are  ergodic.
Some sufficient conditions are given in Section 7.
\end{example}

We conclude the section with the remark that existence of a transportation mapping for (not necessary optimal) symmetric plan
$\pi$  with ergodic $X$-marginal implies ergodicity of  $\pi$.

\begin{proposition}
Let  $X=Y$ is be Polish space and $\Gamma$ be a group of Borel  one-to-one transformations acting on $X$.
Assume that $\pi$ and $\mu$ are $\Gamma$-invariant Borel probability measures on $X \times Y$ and $X$ respectively.
Assume, in addition, that $Pr_X \pi = \mu$, $\mu$ is  ergodic, and $\pi(\{x, T(x)\})=1$
for some Borel mapping $T$. Then $\pi$ is ergodic.
\end{proposition}
\begin{proof}
Assuming the contrary we represent $\pi$ as a convex combinations of two $\Gamma$-invariant measures
$$
\pi = \lambda \pi_1 + (1-\lambda) \pi_2,
$$
$\pi_1  \ne \pi_2$, $0 < \lambda < 1$. Clearly, this implies a similar decomposition for the projections $\mu = \lambda Pr_X \pi_1 + (1-\lambda) Pr_X \pi_2$.
If we show that $\mu_1$, $\mu_2$ are $\Gamma$-invariant and distinct, we will get a contradiction.
The $\Gamma$-invariance of both measures follows immediately from the $\Gamma$-invariance of $\pi_i$.
Let us show that $\mu_1 \ne \mu_2$.
Assume the contrary and take a Borel set $B \subset X \times Y$.
We get that $\pi_i(B)$ equals to $\mu_i(A)$, where $A =\mbox{\rm{Pr}}_X(B \cap \mbox{\rm{Graph}}(T)))$ 
(note that $A$ is universally measurable  as a projection of a Borel  set). Then it follows that $\pi_i$ coincide because $\mu_i$ do coincide.
\end{proof}

\section{Kantorovich duality}

In this section we start to study measures which are invariant under actions of
some group. 
The results of this section will not be used in this paper, but they are of independent interest.

Let $X$, $Y$ be Polish spaces, $\Gamma$ be a locally-compact amenable group with \emph{continuous} actions $L_\Gamma^X$, $L_\Gamma^Y$ on $X$, $Y$ respectively. The action $L_\Gamma$ on the product space $X\times Y$ is defined as follows:
$$
L_g(x,y)=(L_g(x), L_g(y)).
$$
where $L_g$ is an element of $L_\Gamma$ corresponding to $g\in \Gamma$.

Let us define the space $W_\Gamma\subset C_b(X\times Y)$ as the closure of linear span of the following set:
$$
\{f-f\circ L_g:\ f\in C_b(X\times Y),\ g\in \Gamma\}.
$$
It can be checked that the property
\begin{equation}
\label{invariance criterion}
\int \omega d\pi=0,\ \forall \omega\in W_\Gamma
\end{equation}
of a probability measure $\pi\in \PP(X\times Y)$ is equivalent to its invariance w.r.t. $L_\Gamma$.

Let $\mu\in \PP(X)$, $\nu\in \PP(Y)$ be invariant under the actions $L^X_\Gamma$, $L^Y_\Gamma$ respectively. Then a transport plan $\pi\in \Pi(\mu,\nu)$ is invariant iff the property (\ref{invariance criterion}) is satisfied. We denote the set of all invariant
 transport plans by $\Pi^\Gamma(\mu,\nu)$.

The following Theorem is a refinement of the duality result, which was proved in \cite{Zaev1} (Theorem 2.5). In there we considered only   $C_b(X\times Y)$ cost functions (we warn the reader that the classical duality statement from Section 2 is formulated in a slightly different but equivalent way:
in notations of this section $\Phi = \frac{x^2}{2} - \varphi, \Psi = \frac{y^2}{2} - \psi$).  
\begin{theorem}
	\label{Invariant Kantorovich duality}
	Let	$c \in C(X\times Y)$ be a nonnegative function such that there exist $f\in L^1(X,\mu)$, $g\in L^1(Y,\nu)$, and
	$$
	c(x,y)\leq f(x)+g(y),\ \forall (x,y)\in X\times Y.
	$$
	Then, in the setting described above,
	$$
	\inf_{\pi\in \Pi^\Gamma}{\int c d\pi}=\sup_{\Phi + \Psi + \omega \leq c}{\int_{X}{\Phi(x) d\mu} + \int_{Y}{\Psi(y) d\nu}},
	$$
	where $\Phi \in L^1(X)$, $\Psi\in L^1(Y)$, $\omega \in W_\Gamma$.
\end{theorem}
\begin{proof}
	The inequality
	$$
	\inf_{\pi\in \Pi^\Gamma}{\int c d\pi} \geq \sup_{\Phi + \Psi + \omega \leq c}{\int \Phi d\mu}+\int \Psi d\nu
	$$
	can be easily obtained:
	\begin{multline*}
	\inf_{\pi\in \Pi^\Gamma}{\int c d\pi} \geq \inf_{\pi\in \Pi^\Gamma}\left(\sup_{\Phi+\Psi + \omega \leq c}{\int (\Phi + \Psi + \omega) d\pi}\right)=\\
	=\inf_{\pi\in \Pi^\Gamma}\left(\sup_{\Phi + \Psi + \omega \leq c}{\int \Phi d\mu}+\int \Psi d\nu\right)=\sup_{\Phi + \Psi + \omega \leq c}{\int \Phi d\mu}+\int \Psi d\nu.
	\end{multline*}
	To obtain the opposite inequality we use the following statement from Theorem 2.5 of \cite{Zaev1}.
	$$
	\inf_{\pi\in \Pi^\Gamma}{\int c_b d\pi}=\sup_{\Phi + \Psi + \omega \leq c_b}{\int_{X}{\Phi(x) d\mu} + \int_{Y}{\Psi(y) d\nu}}
	$$	
	for $c_b\in C_b(X\times Y)$, $\Phi \in C_b(X)$, $\Psi\in C_b(Y)$, $\omega \in W_\Gamma$.
	Let $c_n(x,y):=\min\{c(x,y), n\}$ for each $n\in N$. The inequality
	$$
	\sup_{\Phi + \Psi + \omega \leq c_n}{\int_{X}{\Phi(x) d\mu} + \int_{Y}{\Psi(y) d\nu}}\leq \sup_{\Phi + \Psi + \omega \leq c}{\int_{X}{\Phi(x) d\mu} + \int_{Y}{\Psi(y) d\nu}}
	$$
	is obvious for any natural $n$. Thus it remains to prove that 
	$$
	\lim_{n\to \infty}\inf_{\pi\in \Pi^\Gamma}{\int c_n d\pi}=\inf_{\pi\in \Pi^\Gamma}{\int c d\pi}.
	$$
	Recall that the functional $\pi\rightarrow \int c_b d\pi$ is weakly continuous for every $c_b\in C_b(X\times Y)$. It follows from the characterization (\ref{invariance criterion}) of invariant measures, that $\Pi^\Gamma(\mu,\nu)$ is a closed
	 subset of $\Pi(\mu,\nu)$, which is known to be compact. Thus $\Pi^\Gamma(\mu,\nu)$ is compact in the topology of weak convergence. If $\pi_n$ is the solution for 
	$$
	\inf_{\pi\in \Pi^\Gamma}{\int c_n d\pi},
	$$
	the sequence $(\pi_n)$ has to have a subsequence converging to some element $\pi^*\in \Pi^\Gamma$. Since for any fixed $m\in \mathbb{N}$ the inequality: $\lim_{n\to \infty}{\int c_n d\pi^*}\geq {\int c_m d\pi^*}$ is satisfied, and,
	by monotone convergence theorem, $\lim_{m\to \infty}{\int c_m d\pi^*}=\int c d\pi^*\leq \int (f(x) + g(y)) d\pi^*<\infty$, we obtain
	$$
	\lim_{n\to \infty}{\int c_n d\pi_n}\geq \lim_{m\to \infty}{\int c_m d\pi^*}=\int c d\pi^*\geq \inf_{\pi\in \Pi^\Gamma}{\int c d\pi}.	
	$$
	This fact concludes the proof of the theorem.
\end{proof}

As one can see, the form of the duality theorem is similar to the well-known classic result, but the difference is substantial: dual functionals are related to each other in a more complicated way. Moreover, there is no existence result for the dual problem without any additional assumptions. 

It was shown in \cite{Zaev1} (Theorem 5.7) that in case of compact group $\Gamma$ and under the assumptions of Theorem \ref{Invariant Kantorovich duality},
	$$
	\inf_{\pi\in \Pi^\Gamma}{\int c d\pi}=\sup_{\Phi + \Psi \leq \bar{c}}{\int_{X}{\Phi(x) d\mu} + \int_{Y}{\Psi(y) d\nu}}.
	$$
where $\bar{c}:=\int_{\Gamma} (c\circ g) d\chi(g)$ and $\chi(g)$ is the probability Haar measure. It is clear that if cost function is $\Gamma$-invariant, the invariant dual problem coincides with the usual one.

Moameni (\cite{Moameni}) proved that for $\Gamma=\mathbb{Z}$ and an invariant cost function $c$, the corresponding invariant dual problem coincides with the usual one, and, moreover, both prime and dual Kantorovich problems have an invariant solution.

\section{Existence of invariant optimal mapping for stationary measures}

Recall that the measures on $\mathbb{R}^{\infty}$ which are invariant with respect to the shift
$$
\sigma(x_1, x_2, \ldots) =  (x_2, x_3, \ldots)
$$
are called stationary measures.
Unlike  exchangeable measures, the projections of stationary measures are in general not  invariant with respect to some reasonable family of  linear transformation.

As usual we assume that $\mathbb{R}^{\infty}$ is approximated by the sequence of finite-dimensional spaces  $\mathbb{R}^{n}$
in the following sense: we identify $\mathbb{R}^{n}$ with the subset
$$
P_n (\mathbb{R}^{\infty}) = \{ x = (x_1, x_2, \cdots, x_n, 0, 0, \cdots)\} \subset  \mathbb{R}^{\infty}.
 $$ 
 On every finite-dimensional space $\mathbb{R}^n$ we will apply the following operator of cyclical shift:
$$\sigma_n (x_1, x_2, \cdots, x_n) = (x_2, x_3, \cdots, x_{n}, x_1).
$$
Let us associate with every stationary measure $\mu$ the cyclical average of its projections:
$$
\hat{\mu}_n = \frac{1}{n} \sum_{i=1}^n (\mu \circ P^{-1}_n) \circ \sigma^{-(i-1)}_n.
$$
In addition, let us denote by
$\mathbb{R}_{m,n}$ the orthogonal complement of $\mathbb{R}^m \subset \mathbb{R}^n$:
$$\mathbb{R}^n = \mathbb{R}^m \times \mathbb{R}_{m,n}, \ m <n.$$

The marginal measures are always assumed to satify the following property: 

{\bf Assumption A.} The measures $\mu,\nu$ are  stationary  Borel probability measures  such that
their projections on every $\mathbb{R}^n$
$$
 \mu \circ Pr_n^{-1}, \ \nu \circ Pr_n^{-1}
$$
have Lebesgue densities and bounded second moments.

We consider symmetric Monge-Kantorovich problem
\begin{equation}
\int (x_1 - y_1)^2 \ d \pi \to \min
\label{GMK}
\end{equation}
where the infimum is taken among of all  stationary measures $\Pi^{\Gamma}(\mu,\nu)$ with marginals $\mu,\nu$.

\begin{remark}
Minimizing $\int (x_1 - y_1)^2 \ d \pi $ is equivalent to maximizing of $\int x_1 y_1 \ d \pi$, because $\int x^2_1 \ d \pi = \int x^2_1 \ d \mu,
\ \int y^2_1 \ d \pi = \int y^2_1 d \nu$ are fixed.
\end{remark}

\begin{theorem}
\label{stationarytransport}
Let $\mu$ be a stationary measure which satisfies the following assumptions:
\begin{itemize}
\item[1)]
$\mu$ is a weak limit of a sequence of $\sigma_n$-invariant measures $\mu_n$
on $\mathbb{R}^n$.
\item[2)] For every $m<n$ there exists a probability measure $\mu_{m,n}$ on $\mathbb{R}_{m,n}$ such that the relative entropy (the Kullback-Leibler distance) between $\mu_m \times \mu_{m,n}$ and $\mu_n$ is uniformly bounded in $n$:
$$
\int \log \Bigl( \frac{ d\mu_n}{d (\mu_m \times \mu_{m,n})}\Bigr) d \mu_n < C_m
$$
with $C_m$ satisfying
$$
\lim_m \frac{C_m}{m}=0;
$$
\item[3)] 
The cyclical average  $\hat{\mu}_n$ of the $n$-dimensional projection $\mu \circ P^{-1}_n$
has finite second moments and admits a density $\rho_n$  with respect to $\mu$
satisfying
$$
\sup_{n} \int \rho^{-\varepsilon}_n d \mu < \infty
$$
for some $\varepsilon >0$.
\end{itemize}
Then there exists a mapping $T$  with the properties
\begin{itemize}
\item
$T$ pushes forward $\mu$ onto  the standard Gaussian measure on $\mathbb{R}^{\infty}$:
$$
\nu = \gamma.
$$
\item
 $T$ a $\mu$-a.e. limit of finite dimensional mappings
$T_n : \mathbb{R}^n \mapsto \mathbb{R}^n$ such that every $T_n$ is a solution to an optimal transportation problem
on $\mathbb{R}^n$.
\end{itemize}
\end{theorem}
\begin{proof}
 We consider the sequence of $n$-dimensional optimal transportation mappings $T_n$ with cost function $\sum_{i=1}^n (x_i-y_i)^2$ pushing forward
$\mu_n$ onto $\gamma_n$. It follows from the $\sigma_n$-invariance of $\mu_n$ and $\gamma_n$ that
the mapping $T_n$ is cyclically invariant:
$$
\langle T_n \circ \sigma_n, e_i \rangle =
\langle T_{n}, e_{i-1} \rangle, \ \ \mu_n-{\rm a.e.}
$$

Fix  a couple of numbers $m,n$ with $n>m$.
Let $T_{m,n}$ be the optimal transportation mapping  for the cost function $\sum_{i=n+1}^{m} (x_i-y_i)^2$  pushing foward $\mu_{m,n}$ onto the standard Gaussian measure on $\mathbb{R}_{m,n}$.
We stress that $T_m$ and $T_{m,n}$ depend on different collections of coordinates.

We extend $T_m$ onto $\mathbb{R}^n$ in the following way:
$$
T_m(x) = T_m(P_m x) + T_{m,n}(P_{m,n} x).
$$
Clearly, $T_m$ pushes forward $\mu_m \times \mu_{m,n}$ onto the standard  Gaussian measure on $\mathbb{R}_n$.
Applying Proposition \ref{generalTalagrand} to the couple of mappings $T_m, T_n$, we get
\begin{equation}
\label{entr-gibbs}
\frac{1}{2} \int \| T_n- T_m\|^2 d \mu_n \le \int  \log\Bigl( \frac{d \mu_n }{d (\mu_m \times \mu_{m,n})}\Bigr) d \mu_{n}.
\end{equation}
This implies
\begin{equation}
\label{mn-l2}
\sum_{i=1}^{m} \int \langle T_n - T_m, e_i \rangle ^2 \ d \mu _n   \le \int \| T_n- T_m\|^2 d \mu_n \le  2 C_m
\end{equation}
for every $m, n$, $m<n$.

Let us note that for every $i$ one can extract a weakly convergent subsequence from a sequence of (signed) measures $\{ \langle T_n, e_i \rangle \cdot \mu_n \}$.
Indeed, for any compact set $K$
$$
\Bigl(\int_{K^c} |\langle T_n, e_i \rangle | d \mu_n\Bigr)^2 \le  \int  |\langle T_n, e_i \rangle |^2  d \mu_n  \cdot \mu_n(K^c) = \int x^2_i \ d \gamma \cdot \mu_n(K^c) .
$$
Using the tightness of $\{\mu_n\}$ we get that $\{|\langle T_n, e_i \rangle | \cdot \mu_n\}$ is a tight sequence. In addition, note that for every continuous $f$
$$
\lim_n \Bigl( \int f |\langle T_n, e_i \rangle | d \mu_n\Bigr)^2 \le \int x^2_i \ d \gamma \cdot \int f^2 d \mu.
$$
This implies that any limiting point of $\{ \langle T_n, e_i \rangle \cdot \mu_n \}$ is absolutely continuous with respect to $\mu$.
Applying the diagonal method and passing to a subsequence one can assume that  convergence takes place for all $i$ simultaneously.
Consequently, there exists a subsequence $\{n_k\}$ and a measurable mapping $T$ with values in $\mathbb{R}^{\infty}$ such that
$$
\langle T_{n_k}, e_i \rangle \cdot \mu_{n_k} \to
\langle T, e_i \rangle \cdot \mu
$$
weakly in the sense of measures for every $i$.
It is easy to check that the standard property of $L^2$-weak convergence holds also in this case:
\begin{equation}
\label{mu-n-fat}
 \int  \langle T, e_i \rangle^2   d \mu
\le
\underline{\lim}_k \int  \langle T_{n_k}, e_i \rangle^2  d \mu_n
= \int x^2_i \ d \gamma =1.
\end{equation}

Finally, we  pass to the limit in (\ref{mn-l2}) and get
\begin{equation}
\label{m-l2}
\sum_{i=1}^{m} \int \langle T - T_m, e_i \rangle ^2 \ d \mu     \le  2C_m.
\end{equation}
The claim follows from (\ref{mu-n-fat}) and the fact that  $\lim_n \int \varphi \ d \mu_n = \int \varphi \ d \mu$ for every $\varphi \in L^2(\mu)$.
Indeed, if $\varphi$ is bounded and continuous, this follows from the weak convergence $\langle T_n, e_i \rangle \cdot \mu_n \to \langle T, e_i \rangle \cdot  \mu$. For arbitrary $\varphi \in L^2(\mu)$ we find 
 continuous bounded  cylindrical function  $\tilde{\varphi}$ such that $\| \varphi - \tilde{\varphi}\|_{L^2(\mu)}< \varepsilon$. One has
$ \lim_n \int \varphi \ d \mu_n = \lim_n\int (\varphi - \tilde{\varphi}) \ d \mu_n + \int \tilde{\varphi} \ d \mu$. The claim follows from the estimate
$$
\Bigl( \int |\varphi - \tilde{\varphi}| \ d \mu_n \Bigr)^2 \le \int (\varphi - \tilde{\varphi})^2 \ d \mu \cdot  \int \rho^2_n \ d \mu \le (\sup_n \int \rho^2_n \ d \mu )\varepsilon^2.
$$

Note that $T$ commutes with the shift $\sigma$: $\langle T \circ \sigma, e_i \rangle = \langle T, e_{i-1} \rangle$.
Indeed, for every  bounded cylindrical $\varphi$ one has
$$
\int \varphi \langle T_n, e_{i-1} \rangle d \mu_n = \int \varphi \langle T_n(\sigma_n), e_i \rangle d \mu_n = \int \varphi (\sigma_n^{-1}) \langle T_n, e_i \rangle d \mu_n
= \int \varphi (\sigma^{-1}) \langle T_n, e_i \rangle d \mu_n.
$$
Here we use  that $\varphi (\sigma_n^{-1}) = \varphi (\sigma^{-1})$ for sufficiently large values of $n$ and the cyclical invariance of $T_n$. Passing to the limit in the $n_k$-subsequence one gets
$$
\int \varphi \langle T, e_{i-1} \rangle d \mu =  \int \varphi (\sigma^{-1}) \langle T, e_i \rangle d \mu =  \int \varphi \langle T \circ \sigma, e_i \rangle d \mu.
$$
Hence $T \circ \sigma = \sigma \circ T$.

Hence by assumptions of the theorem and (\ref{m-l2}) we get
$$
\limsup_m \frac{1}{m} {\sum_{i=1}^{m} \int \langle T - T_m, e_i \rangle ^2 \ d \mu }=0.
 $$ 

To prove that $T$ pushes forward $\mu$ into $\gamma$ it is sufficient to show that that $\langle {T}_m, e_i \rangle \to \langle {T}, e_i \rangle$ in measure (see Lemma \ref{conv-lemm}). To this end
 let us approximate $T_1$ by a bounded function $\xi_1(x_1, \ldots,x_k)$ depending 
on finite number of coordinates in $L^2(\mu)$: $\int \| T_1 - \xi_1\|^2 d \mu < \varepsilon$, where 
$\varepsilon$ is chosen sufficiently small. Set: $\xi_i = \xi \circ \sigma^{i-1}$.
Clearly, we get by the shift invariance
$$
 \frac{1}{m} \int \sum_{i=1}^m (T_i-\xi_i)^2 d \mu = \int (T_1 - \xi_1)^2  d\mu < \varepsilon.
$$
Hence
$$
\limsup_m \frac{1}{m} \int \|  {T}_m - \xi \|^2 d \mu \le \varepsilon, \  \xi = (\xi_1,\xi_2, \ldots).
$$
Let make the change of variables under the cyclical shift $\sigma_n$. One has
$$
\langle {T}_m , e_i \rangle \circ \sigma^{-(i-1)}_m = {T}_1
$$
for all $ 1 \le i \le m$
and
$$
 \xi_i \circ \sigma^{-(i-1)}_m = \xi_1
$$
as soon as $i-1+k \le m$. Hence for the latter values of $i$ one has
$$
\int \langle \xi -  {T}, e_i \rangle^2 d \mu = \int \langle \xi - {T}, e_1 \rangle^2 \ d \mu \circ \sigma_n^{i}.
$$
The number of indices which do not satify this property is limited by $k$. Clearly, it doses not affect the limit of averages.
Finally we obtain
$$
\varepsilon \ge \limsup_m \frac{1}{m} \int \sum_{i=1}^n \langle \xi - {T}_m, e_i \rangle^2 d \mu = \limsup_m \int  \langle 
\xi - {T}_m, e_1 \rangle^2  d \hat{{\mu}}_m.
$$
Recall that $\int (T_1 - \xi_1)^2 d \mu \le \varepsilon$.
Finally
\begin{align*}
\limsup_m  \int  \langle 
T- {T}_m, e_1 \rangle^2  d \hat{{\mu}}_m & \le 2 \limsup_m
\int  \langle 
\xi - {T}_m, e_1 \rangle^2  d\hat{{\mu}}_m \\&  + 2 \limsup_m \int (T_1 - \xi_1)^2 d \hat{{\mu}}_m
\le 4 \varepsilon.
\end{align*}
Since $\varepsilon>0$ is arbitrary, one gets $\int \langle T - {T}_m, e_1 \rangle^2 d \hat{\mu}_m \to 0$. 
By the H{{\" o}}lder inequality 
$$
\int \langle T - {T}_m, e_1 \rangle^{\frac{2}{p}} d \mu \le \Bigl( \int  \langle T - {T}_m, e_1 \rangle^{2} d  \hat{{\mu}}_m \Bigr)^{\frac{1}{p}} \Bigl( \int \rho_m^{-\frac{1}{p-1}} d \mu \Bigr)^{\frac{1}{q}}.
$$
Take $p = 1 + \frac{1}{\varepsilon}$ we get by the assumption of the theorem that the latter tends to zero. The proof is complete.
\end{proof}

\begin{remark}
In Theorem \ref{stationarytransport} the Gaussian measure $\gamma$ can be replaced by any countable power of an uniformly log-concave one-dimensional measure. 
\end{remark}

In the following proposition we prove that the transportation mapping $T$ is indeed
optimal under additional assumptions.

\begin{proposition}
\label{30.10.15}
Let the assumptions of Theorem \ref{stationarytransport}  hold.
Assume in addition that
$$
\lim_{n \to \infty} \frac{1}{n}  W^2_2(\hat{\mu}_n,\mu_n)=0. 
$$
Then there exists a solution $\pi$ of  problem (\ref{GMK}) in the class of stationary measures
such that $\pi \{(x, T(x)), x \in \mathbb{R}^{\infty}\}=1$.
\end{proposition}
\begin{proof}
We show that the measure $\pi = \mu \circ (x,T(x))^{-1}$, which is the weak limit of measures $\pi_n$
is optimal. Recall that 
$\pi_n$ gives minimum to 
$
m \to \int \sum_{i=1}^n (x_i - y_i)^2 d m
$
and has marginals $\mu_n,\gamma_n$, hence measure $\pi$ has marginals $\mu,\gamma$.
Indeed,
$$
\int (x_1 - y_1)^2 d {\pi} = \lim_n \int (x_1 - y_1)^2 d {\pi}_n = \lim_n \frac{1}{n} \int \sum_{i=1}^n (x_i -  y_i)^2 d \pi_n.
$$
If ${\pi}$ is not optimal, when there exists a stationary measure ${\pi}_0$ with projections $\mu,\nu$ such that
$$
\int (x_1 - y_1)^2 d \pi_0  +  \varepsilon<   \frac{1}{n} \int \sum_{i=1}^N (x_i - y_i)^2 d \pi_n
 $$ 
 for some $\varepsilon>0$ and all sufficiently big values of $n$. Taking into account stationarity of $\pi_0$ we get $\int  x_i y_i d \pi_0 = \int  x_j y_j \pi_0$ for every $i,j$, thus
 $$
 \int \sum_{i=1}^n (x_i - y_i)^2 d\hat{\pi}_0 + n \varepsilon  = \int \sum_{i=1}^n (x_i - y_i)^2 d \pi_0 + n \varepsilon < \int \sum_{i=1}^n (x_i - y_i)^2 d \pi_n,
 $$
where  $\hat{\pi}_0 = \frac{1}{n} \sum_{i=1}^{n} (\pi_0 \circ Pr_{n}^{-1}) \circ \sigma^{-(i-1)}_{n}$ . The latter inequality implies
$$
W^2_2(\hat{\mu}_n,\gamma_n) + n \varepsilon \le  W^2_2(\mu_n,\gamma_n) .$$
By the triangle inequality
\begin{align*}
W^2_2(\hat{\mu}_n,\gamma_n) + n \varepsilon & \le (W_2(\mu_n, \tilde{\mu}_n) + W_2(\hat{\mu}_n, \gamma_n) )^2 
\\& 
\le W^2_2(\mu_n, \hat{\mu}_n) + 2 W_2(\hat{\mu}_n, \gamma_n)W_2(\mu_n, \hat{\mu}_n)  +  W^2 _2(\hat{\mu}_n, \gamma_n) .
\end{align*}
Hence
\begin{equation}
\label{30.10}
\varepsilon \le \frac{1}{n} (2 W_2(\hat{\mu}_n, \gamma_n)W_2(\mu_n, \hat{\mu}_n)  +  W^2 _2(\hat{\mu}_n, \mu_n) ).
\end{equation}
The quantity $W^2_2(\hat{\mu}_n, \gamma_n)$ can be trivially estimated by 
$2\sum_{i=1}^n (\int x_i^2 d \hat{\mu}_n + \int y_i^2 d \gamma_n ) \le Cn$. Then the using the assumption of the theorem we get that the right-hand side of (\ref{30.10})
tends to zero, which contradicts to positivity of $\varepsilon$.
\end{proof}

 We finish this section with a concrete application of Theorem
\ref{stationarytransport}. We study a transportation of a Gibbs measure $\mu$ which can be formally written in the form
$$
\mu =  e^{-H(x)} dx,
$$
where the potential $H$ admits the following heuristic representation:
$$
H(x) = \sum_{i=1}^{\infty} V(x_i) + \sum_{i= 1}^{\infty} W(x_i,x_{i+1}).
$$
Here $V$ and $W$ are smooth functions and $W(x,y)$ is symmetric: $W(x,y)=W(y,x)$.
The existence of such measures was proved in \cite{AKRT}.

Let us specify the assumptions about $V$ and $W$.
These are a particular case of assumptions A1-A3 from \cite{AKRT}.

\begin{itemize}
\item[1)]
$$
W(x,y) = W(y,x);
$$
\item[2)]
There exist numbers $J>0$, $L \ge 1$, $N \ge 2$, $\sigma>0$, and $A,B,C>0$ such that
$$
| W(x,y) | \le J(1+ |x|+|y|)^{N-1}, \
| \partial_{x}  W(x,y) | \le J(1+ |x|+|y|)^{N-1}
$$
\item[3)]
$$
|V(x)| \le C(1+|x|)^L, \ \
|V'(x)| \le C(1+|x|)^{L-1};
$$
\item[4)]
(coercivity assumption)
$$
 V'(x) \cdot x \ge A|x|^{N + \sigma} -B.
$$
\end{itemize}

Let us define the following probability measure on $E_n$:
$$
\mu_n = \frac{1}{Z_n} \exp\Bigl( -\sum_{i=1}^{n}\bigl( V(x_i) +  W(x_i, x_{i+1}) \bigr) \Bigr),
$$
with the convention $x_{n+1} :=x_{1}$. Here $Z_n$ is the normalizing constant.

\begin{proposition}
\label{stationarymap}
The sequence $\mu_n$ admits a weakly convergent subsequence
$\mu_{n_k} \to \mu$ satisfying the assumptions of Theorem \ref{stationarytransport}.
\end{proposition}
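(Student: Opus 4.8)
The plan is to verify Assumptions~1)--4) of Theorem~\ref{stat-exist} one by one, the essential input being the a priori estimates for these Gibbs measures proved in \cite{AKRT}, which rest on the coercivity condition $V'(x)\cdot x\ge A|x|^{N+\sigma}-B$: they provide uniform moment bounds $\sup_n\int|x_i|^k\,d\mu_n<\infty$ for every $k$, the corresponding exponential bounds $\sup_n\int e^{\varepsilon(1+|x_i|)^{N-1}}\,d\mu_n<\infty$ for small $\varepsilon>0$ (here one uses $N+\sigma>N-1$), tightness of $\{\mu_n\}$, and the fact that every weak limit point $\mu$ is a shift-invariant infinite-volume Gibbs measure for $H$ with the same moment bounds. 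Assumption~1) is then immediate: the $\sigma_n$-invariance of $\mu_n$ holds because the cyclical shift permutes the summands of $\sum_{i=-n}^{n}\bigl(V(x_i)+W(x_i,x_{i+1})\bigr)$ (with $x_{n+1}:=x_{-n}$), the density is everywhere positive, and the second moments are finite; passing to a subsequence gives $\mu_{n_k}\to\mu$ weakly with $\mu$ stationary.

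For Assumption~2) we exploit that the interaction is nearest-neighbour. Take $\mu_{m,n}$ to be the periodic Gibbs measure on $E_{m,n}$ viewed as a single cycle $m+1,\dots,n,-n,\dots,-m-1$ (so that its bonds include $W(x_n,x_{-n})$ and the seam bond $W(x_{-m-1},x_{m+1})$). Comparing the Hamiltonians of $\mu_n$ and of $\mu_m\times\mu_{m,n}$, all the $V$-terms cancel, as do all the $W$-bonds except for a bounded number near the block boundaries, and one obtains
\[
\frac{d\mu_n}{d(\mu_m\times\mu_{m,n})}=c_{m,n}\,e^{g_{m,n}},\qquad
g_{m,n}=W(x_m,x_{-m})+W(x_{-m-1},x_{m+1})-W(x_{-m-1},x_{-m})-W(x_m,x_{m+1}),
\]
with a normalising constant $c_{m,n}$. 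Hence the relative entropy equals $\int g_{m,n}\,d\mu_n-\log\int e^{g_{m,n}}\,d(\mu_m\times\mu_{m,n})$, which by Jensen's inequality is at most $\int g_{m,n}\,d\mu_n-\int g_{m,n}\,d(\mu_m\times\mu_{m,n})$; since $|W(x,y)|\le J(1+|x|+|y|)^{N-1}$ and all the measures involved have uniformly bounded moments of order $N-1$, this is bounded by a constant independent of $m$ and $n$. Thus $C_m$ may be taken constant, and $\lim_m C_m/m=0$.

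For Assumptions~3) and 4) the same bookkeeping is used. The map $\sigma_m^l$ is a permutation of the finitely many coordinates indexed by $E_m$ and preserves Lebesgue measure, so, rigorously via the DLR equations, $d\mu/d(\mu\circ(\sigma_m^l)^{-1})=e^{u_{m,l}}$ where $u_{m,l}=H\circ(\sigma_m^l)^{-1}-H$ is an honest finite sum of at most a fixed number of terms $\pm W(x_a,x_b)$ with $a,b\in[-m-1,m+1]$: the $V$-part is unchanged, one internal nearest-neighbour bond is replaced by another, and the two boundary bonds $W(x_{-m-1},x_{-m})$, $W(x_m,x_{m+1})$ change their inner endpoint, all other terms cancelling. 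Using the growth bound on $W$, the stationarity of $\mu$ and its uniform exponential moments, H\"older's inequality gives $\sup_{m,l}\|e^{u_{m,l}}\|_{L^{2+\delta}(\mu)}<\infty$ for a suitable $\delta>0$. For Assumption~4), the DLR equations identify $\mu\circ P_n^{-1}$ as the $\mu$-average over the boundary values $(x_{-n-1},x_{n+1})$ of the free-boundary finite-volume Gibbs measure on $E_n$, whose density differs from that of $\mu_n$ only through the boundary bonds and a partition-function ratio; bounding that ratio by Jensen's inequality and the uniform moment bounds (and using that $\mu(|x_{-n-1}|\le R,\ |x_{n+1}|\le R)$ stays bounded below in $n$, by stationarity) yields a pointwise estimate $\rho_n(x)\le C\,e^{C(1+|x_n|+|x_{-n}|)^{N-1}}$ with $C$ independent of $n$, so $\sup_n\int\rho_n^{2+\delta}\,d\mu<\infty$ by the exponential moments of $\mu$.

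The real obstacle is not any one of these verifications individually but the uniform a priori control they all rely on: uniform polynomial and exponential moment bounds for $\mu_n$ and for $\mu$, and the comparability of partition functions under periodic versus free boundary conditions. These are exactly the estimates furnished by \cite{AKRT} under the conditions A1--A3, so the task here is mainly the bookkeeping that turns them into the hypotheses of Theorem~\ref{stat-exist}; the step needing the most care is Assumption~4), where periodic and free boundary conditions must be matched.
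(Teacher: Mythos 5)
Your proof is correct and follows essentially the same route as the paper's: both rely on the uniform exponential a priori estimates from \cite{AKRT} for tightness and moment control, and then verify Assumptions 1)--4) of Theorem \ref{stat-exist} by tracking the finitely many boundary bonds of $W$ by which the periodic, free-boundary, and product Hamiltonians differ. The only cosmetic difference is your choice of $\mu_{m,n}$ as a periodic chain with a seam bond (four correction terms) versus the paper's open chain (three terms), which changes nothing of substance.
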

\begin{proof}
It was proved in  Theorem 3.1 of \cite{AKRT}
that any sequence of probability measures $$\tilde{\mu}_n = c_n e^{-H_n} dx_{-n} \cdots dx_n,$$ where
$H_n$ is obtained from $H$ by fixing a boundary condition $\tilde{x}$
$$
H_n = \sum_{i=1}^{n} V(x_i) + \sum_{i=1}^{n-1} W(x_i, x_{i+1}) +    W(x_{n}, \tilde{x}_{1}),
$$
has a weakly convergent subsequence $\tilde{\mu}_{n_k} \to \tilde{\mu}$.
In addition (see \cite{AKRT}), $\mu$ satisfies  the following a priori estimate: for every $\lambda>0$
$$
\sup_{k \in \mathbb{N}} \int \exp(\lambda |x_k|^N) \ d \tilde{\mu} < \infty.
$$ The same estimate holds for $\tilde{\mu}_n$ uniformly in $n$.

Following the reasoning from \cite{AKRT} it is easy to show that the sequence $\{\mu_n\}$ is tight and satisfies the same a priori estimate.
Thus, we can pass to a subsequence $\{\mu_{n'}\}$ which weakly converges to a measure $\mu$.
For the sake of simplicity this subsequence will be denoted by $\{\mu_n\}$ again. The limiting measure $\mu$  satisfies
\begin{equation}
\sup_{k \in \mathbb{N}} \int \exp(\lambda |x_k|^N) \ d {\mu} < \infty,
\end{equation}
moreover,
\begin{equation}
\label{exp-ap-est}
\sup_n \sup_{k \in \mathbb{N}} \int \exp(\lambda |x_k|^N) \ d {\mu_n} < \infty.
\end{equation}

Let us estimate the relative entropy. We note that $\mu_n$ and $\mu_m$ ($n>m$) are related in the following way:
$$
\frac{e^{Z} \mu_n}{\int e^{Z} d \mu_n}  = \mu_m \times \nu_{m,n},
$$
where $Z = -W(x_m, x_{1}) + W(x_m,x_{m+1})+ W(x_n, x_1)$, and $\nu_{m,n}$ is a probability measure on $E_{m,n}$. Set: $\mu_{m,n} = \nu_{m,n}$.
Then
$$
\int \log \Bigl( \frac{ d\mu_n}{d (\mu_m \times \mu_{m,n})}\Bigr) d \mu_n = \int (Z - \log \int e^Z \ d \mu_n) \ d \mu_n.
$$
The desired bound follows immediately from (\ref{exp-ap-est}) and the assumptions about $W$.

In order to prove assumption 3) we note that
$$
\frac{ \Bigl[ e^{W(x_n,x_{n+1}) + W(x_{1}, x_n)} \cdot \mu \Bigr] \circ P^{-1}_n}{\int e^{W(x_n,x_{1}) + W(x_{1}, x_{n})} d \mu} =
\frac{ e^{W(x_1,x_{n})} \cdot \mu_n}{\int e^{W(x_1,x_{n})} \ d \mu_n}.
$$
The normalizing constants can be easily estimated with the help of a priori bounds for $\mu$ and $\mu_n$. Applying assumptions on $W$ one can easily get that
$$
A e^{-B(|x_n|^{N-1} + |x_{q}|^{N-1})} \le  \frac{d \mu_n}{d \mu \circ P^{-1}_n} \le A e^{B(|x_n|^{N-1} + |x_{1}|^{N-1})}
$$
where $A, B >0$  do not depend on $n$. Hence, Assumption 3) follows immediately from (\ref{exp-ap-est}), the Jensen inequality and convexity if the function $x^{-\varepsilon}$.
\end{proof}

\begin{remark}
Finally, let us briefly discuss when the transportation mapping obtained in Proposition \ref{stationarymap} by  Theorem \ref{stationarytransport}
solves the corresponding optimal transportation problem.
To this end we apply Proposition \ref{30.10.15}.

Following the  estimates obtained in Proposition \ref{stationarymap} and applying Jensen inequality
one can easily show that the sequence of the entropies
$$ 
\int \log \Bigl( \frac{d \hat{\mu}_n}{d \mu_n} \Bigr) \ d {\hat{\mu}}_n 
$$
is bounded. Then the assumption of Proposition \ref{stationarymap} holds, for instance, if  every $\mu_n$ satisfies the Talagrand  inequality
$$
W^2_2(\mu_n, \rho \cdot \mu_n) \le C \int \rho \log \rho d \mu_n
$$
with constant which does not depends on $n$.
We don't investigate  here sufficient condition for measures $\mu_n$ to satisfy this inequality, we just mention that this clearly holds in many natural situations
(e.g. under assumption of uniform log-concavity or finiteness of the log-Sobolev constant).

In addition, we emphasize, that in many applications the measures do indeed satisfy the Talagrand inequality, but Proposition \ref{30.10.15} should actually work under much milder assumptions.
\end{remark}

\end{document}